\pdfoutput=1                      

\documentclass[11pt,reqno]{amsart}

\usepackage[accsupp]{axessibility}    

\usepackage[a4paper,margin=1.2in]{geometry}

\usepackage{amsmath}
\usepackage{amssymb}
\usepackage{amsthm}
\usepackage{color}
\usepackage{verbatim}
\usepackage{tabularx}

\usepackage{mathrsfs}
\usepackage{hyperref}

\theoremstyle{theorem}
\newtheorem{theorem}{Theorem}
\newtheorem*{theorem*}{Theorem}
\newtheorem{introtheorem}{Theorem}

\newtheorem{conjecture}[theorem]{Conjecture}
\newtheorem{corollary}[theorem]{Corollary}

\newtheorem{lemma}[theorem]{Lemma}

\theoremstyle{definition}
\newtheorem{remark}[theorem]{Remark}

\newcommand{\R}{\mathbb{R}}

\newcommand{\eps}{\varepsilon}

\newcommand{\II}{\mathrm{II}}

\newcommand{\di}{\mathrm{d}}

\newcommand{\HH}{\mathbb{H}}

\newcommand{\Ricc}{\mathrm{Ric}}

\newcommand{\vol}{\mathrm{vol}}
\newcommand{\area}{\mathrm{Area}}
\newcommand{\lip}{\mathrm{Lip}}

\newcommand{\Sec}{\mathrm{Sec}}

\newcommand{\ber}{\mathscr{B}}

\DeclareMathOperator{\dist}{dist}

\renewcommand{\div}{\mathrm{div}}

\title[Minimal graphs with sublinear growth]{On minimal graphs of sublinear growth over manifolds with non-negative Ricci curvature}

\author{Giulio Colombo}
\address{Dipartimento di Matematica e Applicazioni ``R. Caccioppoli", Universit\`a degli Studi di Napoli ``Federico II'', Via Vicinale Cupa Cintia 26, I-80126 Napoli (Italy).}
\email{giulio.colombo@unina.it}

\author{Luciano Mari}
\address{Dipartimento di Matematica ``G. Peano", Universit\`a degli Studi di Torino, Via Carlo Alberto 10, 10123 Torino (Italy).}
\email{luciano.mari@unito.it}

\author{Marco Rigoli}
\address{Dipartimento di Matematica ``F. Enriques", Universit\`a degli Studi di Milano, Via Saldini 50, I-20133 Milano (Italy).}
\email{marco.rigoli55@gmail.com}

\begin{document}

\maketitle

\begin{abstract}
We prove that entire solutions of the minimal hypersurface equation 
	\begin{equation}
		\div\left(\frac{Du}{\sqrt{1+|Du|^2}}\right) = 0
	\end{equation}
on a complete manifold with $\Ricc \ge 0$, whose negative part grows like $\mathcal{O}(r/\log r)$ ($r$ the distance from a fixed origin), are constant. This extends the Bernstein Theorem for entire positive minimal graphs established in recent years. The proof depends on a new technique to get gradient bounds by means of integral estimates, which does not require any further geometric assumption on $M$. 
\end{abstract}

\bigskip

\noindent \textbf{MSC 2020} {
Primary 53C21, 53C42; Secondary 53C24, 58J65, 31C12
}

\noindent \textbf{Keywords} {
Bernstein theorem, splitting, minimal graph, Ricci curvature, linear growth
}

\bigskip

The aim of this note is to prove the following Liouville-type theorem for entire solutions $u$ of the minimal hypersurface equation on complete manifolds of non-negative Ricci curvature. Hereafter, we denote with
\[
u_-(x) = \max\{0,-u(x)\}
\]
the negative part of $u$. 

\begin{introtheorem} \label{thm_main2}
	Let $M$ be a connected, complete Riemannian manifold with $\Ricc\geq 0$ and $u\in C^\infty(M)$ a solution of the minimal hypersurface equation
	\begin{equation}
		\div\left(\frac{Du}{\sqrt{1+|Du|^2}}\right) = 0 \qquad \text{on } \, M.
	\end{equation}
	Let $r$ be the Riemannian distance from a fixed point $o\in M$. If
	\begin{equation} \label{uArg}
	u_-(x) = \mathcal{O}\left(\frac{r(x)}{\log r(x)}\right) \qquad \text{as } \, r(x) \to \infty,
	\end{equation}
	then $u$ is constant.
\end{introtheorem}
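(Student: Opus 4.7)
My plan is to pass to the complete minimal hypersurface $\Sigma:=\{(x,u(x)):x\in M\}\subset M\times\R$ and carry out all the analysis on $\Sigma$. Since $M$ has $\Ricc\geq 0$ the product $M\times\R$ does too, so a Brendle-type Michael--Simon Sobolev inequality holds on $\Sigma$ with a universal constant; this supplies the functional-analytic engine that would otherwise have to come from a Sobolev or volume assumption on $M$. Two intrinsic objects on $\Sigma$ drive the argument: the height function $h:(x,u(x))\mapsto u(x)$, which is harmonic on $\Sigma$ because $\Sigma$ is minimal, and the area element $W:=\sqrt{1+|Du|^2}$, whose reciprocal $\theta=1/W$ satisfies the Jacobi-type inequality $\Delta_\Sigma\log\theta\leq -|A|^2$, with $A$ the second fundamental form of $\Sigma$.

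The core of the proof should be an integral gradient estimate on $\Sigma$, in the spirit of Bombieri--De Giorgi--Miranda. I would test the Jacobi inequality against a compactly supported cut-off of the form $\phi=\eta(r)\,\Psi(u,r)$, where $\eta$ is a standard radial cut-off on $M$ (lifted to $\Sigma$ via the projection, which is $1$-Lipschitz) and $\Psi$ is an exponential-type weight carefully tuned to the growth of $u_-$. Because the hypothesis allows $u_-$ to grow at rate $r/\log r$, the weight $\Psi$ must be chosen so that its logarithmic derivative is of size $(\log r)/r$, precisely matching the logarithmic loss that will appear when we Moser-iterate the ensuing reverse-H\"older estimate using Michael--Simon on $\Sigma$. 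The iteration should then produce a pointwise bound $W\leq C$ on all of $\Sigma$, with $C$ depending only on the constants encoded in $u_-=\mathcal{O}(r/\log r)$.

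Once $W$ is globally bounded, the vertical projection $\Sigma\to M$ is bi-Lipschitz, and $u$ satisfies on $M$ a uniformly elliptic equation in divergence form whose negative part is $\mathcal{O}(r/\log r)$; equivalently, $h=u$ is a globally Lipschitz harmonic function on $\Sigma$ with one-sided sublinear growth. To conclude that $u$ is constant I would then invoke a Yau-type Liouville theorem for uniformly elliptic divergence equations on manifolds with $\Ricc\geq 0$ under a one-sided sublinear growth bound, or equivalently use the bounded gradient to produce a line in $M\times\R$ via a Busemann-function construction and then apply the Cheeger--Gromoll splitting theorem.

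The main obstacle I foresee is the integral estimate itself. Having no Sobolev or volume comparison available on $M$, every integration by parts has to be carried out intrinsically on $\Sigma$, where the ambient hypothesis $\Ricc_M\geq 0$ enters only through the Michael--Simon inequality and not through a direct Bochner identity on $M$. The test function $\phi=\eta(r)\Psi(u,r)$ must therefore be engineered so that the contributions from $|\nabla_\Sigma\eta|$, $|\nabla_\Sigma\Psi|$, and the cubic term generated by differentiating $W$ are all absorbed, either by the good $|A|^2$ term in the Jacobi inequality or by the Sobolev embedding after rescaling. Balancing the bookkeeping so that the iteration closes exactly at the threshold $r/\log r$, with the logarithmic loss of Moser iteration matched against the $1/\log r$ slack in the hypothesis, is where I expect the novelty of the argument to be essential and the calculation to be most delicate.
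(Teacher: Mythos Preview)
Your plan has a genuine gap at its foundation: the claim that ``since $M$ has $\Ricc\geq 0$ the product $M\times\R$ does too, so a Brendle-type Michael--Simon Sobolev inequality holds on $\Sigma$ with a universal constant'' is not justified. Brendle's sharp Michael--Simon inequality is proved for submanifolds of Euclidean space, and its known extensions to curved ambient spaces require non-negative \emph{sectional} curvature, not merely non-negative Ricci. Under $\Ricc\geq 0$ alone, the isoperimetric (hence Sobolev) inequality on $\Sigma$ is in general false unless $M$ has Euclidean volume growth; the paper states this explicitly in the introduction. Since your entire engine---the Moser iteration that is supposed to close the $r/\log r$ gap---rests on this Sobolev inequality, the argument as written does not go through. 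The other ingredients you mention (Jacobi inequality for $\log(1/W)$, the Bombieri--De~Giorgi--Miranda test function, the final Liouville step once $|Du|\in L^\infty$) are sound, but without the Sobolev step you have no mechanism to pass from integral to pointwise control of $W$.

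The paper's proof is designed precisely to avoid any isoperimetric or Sobolev input on $\Sigma$. Instead of iterating against a Sobolev embedding, it works with the Korevaar-type function $z=We^{-Cu}$, which on $\{\Ricc\geq 0\}$ satisfies $\div_g(W^{-2}\nabla z)\geq C^2\|\nabla u\|^2 W^{-2}z$. Testing this against $\psi^\alpha z$ gives a weighted Caccioppoli inequality on the superlevel set $\{z>s_1\}$; a H\"older step then yields
\[
\Big(\frac{e^\mu Rh(R)}{\alpha}\Big)^\alpha\int_{E_R\cap B_{R/2}} e^{-2C_Ru}\,\di v_g \leq \int_{E_R} e^{-2C_Ru}\,\di v_g,
\]
and the point is that one is free to choose $\alpha=\alpha(R)=Rh(R)$, so the prefactor becomes $e^{\mu Rh(R)}$. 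The right-hand side is controlled by a calibration-type volume estimate for $\Sigma$ in cylinders (no Sobolev needed), and with $h(t)=(3+\log(1+t))/t$ matched to the hypothesis $u_-=\mathcal O(r/\log r)$ the factor $e^{\mu Rh(R)}$ beats the polynomial Bishop--Gromov volume bound, forcing $\sup_M W<\infty$. The concluding Liouville step is then the one you describe. So the logarithmic matching you anticipated is indeed the crux, but it is achieved by optimizing the exponent in a single Caccioppoli--H\"older step rather than by Moser iteration against a Sobolev inequality that is unavailable here.
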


The interest in this problem stems from a series of classical and more recent results concerning the global behaviour of entire solutions of the minimal hypersurface equation both on Euclidean spaces and on suitable classes of complete Riemannian manifolds. In particular, the result improves on the Bernstein Theorem for positive minimal graphs on complete manifolds with $\Ricc \ge 0$ recently proved by the authors with Magliaro \cite{cmmr} and by Ding \cite{ding21}. The main novelty of the present paper is that we can treat, without any additional requirement besides $\Ricc \ge 0$, solutions whose negative part diverges in a controlled way, close to the sharp condition $u_-(x) = o(r(x))$ under which it is expected that the conclusion of Theorem \ref{thm_main2} may hold as well in the stated assumptions.

\section{Introduction}

In Euclidean space $\R^{m+1}$, the (generalized) Bernstein theorem considers minimal hypersurfaces $\Sigma$ given as the graph of a function $u : \R^m \to \R$, which therefore solves the minimal hypersurface equation
	\begin{equation} \tag{MSE} \label{MSE}
		\div\left(\frac{Du}{\sqrt{1+|Du|^2}}\right) = 0. 
	\end{equation}
Having defined the following property:
\begin{itemize}
\item[$(\ber 1)$] all solutions to \eqref{MSE} are affine functions,
\end{itemize}
the theorem guarantees that 
\[
\begin{array}{c}
\text{$(\ber 1)$ holds for solutions}\\
\text{$u : \R^m \to \R$ to \eqref{MSE}}
\end{array} \qquad \Longleftrightarrow \qquad m \le 7.
\]
For a detailed treatment of this cornerstone result, see \cite{giusti}. On the other hand, if one assumes an a-priori bound on $u$ (or $|Du|$), then Bernstein-type theorems holding true in every dimension $m \ge 2$ have been obtained by Moser \cite{mos61} and Bombieri, De Giorgi and Miranda \cite{bdgm}. In particular, the main result in \cite{bdgm} guarantees the following properties:
\begin{itemize}
\item[$(\ber 2)$] positive solutions to \eqref{MSE} are constant;
\item[$(\ber 3)$] solutions to \eqref{MSE} with at most linear growth on one side, namely, satisfying
\begin{equation}\label{eq_linear_oneside}
u_-(x) = \mathcal{O}\big(r(x)\big) \qquad \text{as } \, r(x) \to \infty
\end{equation}
are affine functions. 
\end{itemize}
Here, $r(x)$ is the distance from a fixed origin. Notice that $(\ber 3)$ implies $(\ber 2)$, indeed in the following strengthened form:
\begin{itemize}
\item[$(\ber 2')$] Solutions to \eqref{MSE} satisfying $u_-(x) = o\big(r(x)\big)$ as $r(x) \to \infty$ are constant.  
\end{itemize}
Also, $(\ber 3)$ implies Moser's theorem in \cite{mos61}: globally Lipschitz solutions to \eqref{MSE} are affine. 

\vspace{0.3cm}

A natural question one might ask is under which conditions a similar picture occurs for minimal graphs in more general ambient spaces. Topological products $M \times \R$ with $(M^m,\sigma)$ a complete connected Riemannian manifold are a natural setting. Given $u : M \to \R$ and the associated graph map 
\[
\Gamma_u \ : \ M \to M \times \R, \qquad \Gamma_u(x) = (x,u(x)),
\]
the possible rigidity of $\Sigma = \Gamma_u(M)$ depends on the metric chosen on $M \times \R$, and here we focus on the product metric $\sigma + \di t^2$. Then, identifying $\Sigma$ with $M$ endowed with the induced metric $g = \Gamma_u^*(\sigma + \di t^2)$, $\Sigma$ is minimal if and only if $u$ solves \eqref{MSE} on $M$, where now $D$, $\div$ are the gradient and divergence in $(M,\sigma)$. 

\begin{remark}
The problem has also been considered for different warped product metrics on $M \times \R$, we refer to \cite{bcmmpr,bmpr} for motivations and a detailed account.  
\end{remark}

By \cite{nr02,dfr10} (see also \cite{dvh94,dvh95} for previous achievements), in hyperbolic space $\HH^m$ any continuous boundary value $\phi \in C(\partial_\infty \HH^m)$ gives rise to a bounded solution to \eqref{MSE} attaining $\phi$ at infinity. Hence, there are plenty of bounded minimal graphs over $\HH^m$, and $(\ber 1),(\ber 2),(\ber 3)$ fail. The results have been extended to Cartan-Hadamard manifolds with suitably pinched negative sectional curvature, see \cite{bcmmpr} and the references therein. On the other hand, for reasons explained in \cite{bcmmpr,cgmr,cmmr}, results analogous to those for $M=\R^m$ might be expected on complete manifolds $(M,\sigma)$ with non-negative sectional or Ricci curvature: 
\[
\Sec \ge 0, \qquad \text{or} \qquad \Ricc \ge 0.
\]
The analogy is also suggested by the theory of harmonic functions, in particular by the classical results of Yau and Cheng-Yau \cite{yau, chengyau}, Kasue \cite{kasue} and Cheeger-Colding-Minicozzi \cite{ccm95}. In this respect, notice that on the graph $\Sigma = (M,g)$ equation \eqref{MSE} rewrites as $\Delta_g u = 0$. 

Apart from the Euclidean space, $(\ber 1)$ was proved on manifolds with $\Ricc \ge 0$ and satisfying the mild volume growth  condition 

\begin{equation}\label{eq_mildvol}
\int^\infty \frac{r \di r}{\vol(B_r)} = \infty.
\end{equation}
Indeed, one gets the following rigidity result, well-known at least if $M$ has dimension $2$ (a case for which \eqref{eq_mildvol} is automatically satisfied if $\Ricc \ge 0$). See \cite[Theorem 6(i) and Remark 5]{cgmr}.

\begin{theorem}[\cite{cgmr}]\label{teo_B1}
Let $(M,\sigma)$ be a complete manifold with $\Ricc \ge 0$ and satisfying \eqref{eq_mildvol}, and let $u : M \to \R$ be a non-constant solution to \eqref{MSE}. Then, $M$ admits a splitting $N \times \R$ with the product metric $\sigma_N + \di s^2$ such that $u(y,s) = as + b$ for some $a,b \in \R$.
\end{theorem}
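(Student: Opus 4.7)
The strategy is to transfer the problem to the minimal graph $\Sigma = (M,g)$ with $g = \sigma + \di u \otimes \di u$, viewed as a hypersurface in $(M \times \R, \sigma + \di t^2)$, exploit the Killing field $\partial_t$ to produce a distinguished superharmonic function on $\Sigma$, and finally to reduce to a classical Bochner plus de Rham argument on $(M,\sigma)$. On $\Sigma$, $u$ becomes $g$-harmonic, $|\nabla_g u|^2 = 1 - 1/v^2 \in [0,1)$ with $v = \sqrt{1+|Du|^2}$, and the vertical component $\theta = \langle \partial_t, \nu\rangle = 1/v \in (0,1]$ of the unit normal is everywhere positive.

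Because $\partial_t$ is a parallel (hence Killing) field on $M \times \R$, the standard Jacobi identity gives
\[
\Delta_g \theta + \bigl(|A|^2 + \Ricc_M(\nu_h, \nu_h)\bigr) \theta = 0,
\]
where $A$ is the second fundamental form of $\Sigma$ and $\nu_h$ is the horizontal component of $\nu$. Under $\Ricc_M \ge 0$ this makes $\theta$ a bounded positive $g$-superharmonic function; equivalently $\log v \ge 0$ is $g$-subharmonic and, via an elementary manipulation, satisfies the improved inequality $\Delta_g \log v \ge |\nabla_g \log v|^2$.

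The heart of the argument is to show that $v$ is constant on $M$. I would rewrite the MSE as $\text{div}_\sigma(v^{-1} Du) = 0$, so that $u$ appears as a harmonic function for the weighted Laplacian on the weighted manifold $(M, \sigma, v^{-1}\di\vol_\sigma)$. Since the weighted volume satisfies $V_w(B_R^\sigma) = \int_{B_R^\sigma} v^{-1}\di\vol_\sigma \le \vol_\sigma(B_R^\sigma)$, the hypothesis \eqref{eq_mildvol} transfers verbatim, giving parabolicity of the weighted structure in Grigor'yan's sense. Combining this weighted parabolicity with the Jacobi information above—tested against cutoffs supported in $\sigma$-balls and rewritten through the change-of-measure $\di\vol_g = v\, \di\vol_\sigma$—the weighted $L^\infty$-Liouville theorem forces $\theta$, hence $v$, to be constant on $M$.

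Once $v$ is constant the MSE reduces to $\Delta_\sigma u = 0$ and $|Du| \equiv a > 0$ because $u$ is non-constant. Bochner's identity
\[
0 \,=\, \tfrac12 \Delta_\sigma |Du|^2 \,=\, |\Hess_\sigma u|^2 + \Ricc_M(Du,Du),
\]
together with $\Ricc_M \ge 0$, then forces $\Hess_\sigma u \equiv 0$, so $Du/a$ is a global unit parallel vector field on the complete manifold $(M,\sigma)$. De Rham's splitting theorem applied to the universal cover, together with the single-valuedness of the potential $u$ on $M$ (which prevents any non-trivial deck transformation along the $\R$-factor), produces the isometric decomposition $M = N \times \R$ with the product metric and $u(y,s) = as + b$. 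The main obstacle is the middle step: the Jacobi inequality for $\theta$ lives intrinsically on $\Sigma$, whose volume geometry is not a priori controlled by \eqref{eq_mildvol}, while the parabolicity test applies naturally on $(M,\sigma)$; the delicate task is to route the Jacobi information on $\Sigma$ through the weighted structure on $(M,\sigma)$ so as to avoid any direct estimate on $\vol_g(B_R^g)$.
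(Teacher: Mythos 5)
Your endgame is fine: once $W$ is known to be constant, \eqref{MSE} reduces to $\Delta_\sigma u=0$, Bochner (or, more directly, the Jacobi equation, which then forces $\|\II\|\equiv 0$, i.e. $\Hess_\sigma u\equiv 0$) gives a parallel gradient, and the splitting with $u(y,s)=as+b$ follows; also, the opening observations ($u$ is $\Delta_g$-harmonic, $\theta=W^{-1}$ is positive and $\Delta_g$-superharmonic by the Jacobi equation) are exactly the standard ingredients. The genuine gap is the Liouville step itself, which you do not carry out and which your proposed mechanism cannot deliver as stated. The Jacobi inequality makes $\theta$ superharmonic for the graph Laplacian $\Delta_g$, whose Dirichlet form is $\int_M \sigma(\mathbf{A}D\psi,D\psi)\,\di v$ with $\mathbf{A}$ of eigenvalues $W$ and $W^{-1}$; this form \emph{dominates} the weighted form $\int_M W^{-1}|D\psi|^2\,\di v$ of the structure $(M,\sigma,W^{-1}\di v)$, and parabolicity transfers only along inequalities in the opposite direction, so parabolicity of the weighted manifold says nothing about $\Delta_g$-superharmonic functions. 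Concretely, if you test the Jacobi inequality with $\sigma$-radial cutoffs, the error term is of size $R^{-2}\int_{B_{2R}\setminus B_R} W\,\di v \approx R^{-2}\vol_g(\Gamma_u(B_{2R}))$, and since Theorem \ref{teo_B1} assumes no growth condition on $u$ at all, \eqref{eq_mildvol} gives no control whatsoever on this quantity. The ``delicate task'' you flag in your last sentence is therefore precisely the missing proof, not a technical detail.

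The way this step is actually closed (this is the route of \cite{cgmr}, which the present paper cites rather than reproves) is to work intrinsically on $(\Sigma,g)$ rather than through a weighted structure on $(M,\sigma)$: since $g\ge\sigma$ and $|u(x)-u(o)|$ is at most the $g$-distance between $(x,u(x))$ and $(o,u(o))$, the intrinsic ball $B^g_R$ lies in the cylinder $B_R\times(u(o)-R,\,u(o)+R)$, and the calibration estimate (the same computation as \eqref{Vest1} with $b-a=2R$) yields $\vol_g(B^g_R)\le\vol(B_{2R})$. Hence \eqref{eq_mildvol} implies $\int^\infty r\,\di r/\vol_g(B^g_r)=\infty$, so $(\Sigma,g)$ is parabolic by the Grigor'yan--Karp volume criterion, and the bounded-below $\Delta_g$-superharmonic function $W^{-1}$ must be constant; the Jacobi equation then gives $\|\II\|\equiv0$, i.e. $\Hess_\sigma u\equiv0$, and the conclusion follows. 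One small simplification for your final step: no passage to the universal cover or deck-transformation argument is needed, since a complete manifold carrying a function with non-zero parallel gradient splits directly (the level sets of $u$ together with the flow of $Du/|Du|^2$ realize the isometry $M\cong N\times\R$).
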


To the best of our knowledge, property $(\ber 1)$ seems far from reaching on manifolds not satisfying \eqref{eq_mildvol}, even in case $\Sec \ge 0$. On the other hand, much progress was made in recent years for properties $(\ber 2),(\ber 3)$. When $\Sec \ge 0$, by \cite[Corollary 10]{cgmr} property $(\ber 3)$ holds: \begin{quote}
If $M$ is complete, connected with $\Sec \ge 0$, and if $u$ is a non-constant solution to \eqref{MSE} satisfying \eqref{eq_linear_oneside}, then the conclusion of Theorem \ref{teo_B1} holds. 
\end{quote}  
Consequently, $(\ber 2)$ and $(\ber 2')$ hold as well.\par
Compared to the case of non-negative sectional curvature, the problem on manifolds only satisfying $\Ricc \ge 0$ presents new challenges. To justify this statement, we first observe that the techniques in \cite{yau, chengyau, ccm95} to study harmonic functions rely in an essential way on two ingredients:
\begin{itemize}
\item[(i)] The Bochner formula for solutions to $\Delta u = 0$;
\item[(ii)] The properties of the distance $r$ to a fixed point in $(M, \sigma)$, in particular the Laplacian bound
\begin{equation}\label{eq_deltar}
\Delta r \le \frac{m-1}{r}
\end{equation}
which follows from comparison theory.
\end{itemize}
Trying to follow the same approach when $u$ solves \eqref{MSE} instead of $\Delta u = 0$, a natural starting point in place of the Bochner formula is the Jacobi equation
\[
\Delta_g W^{-1} + \left( \|\II\|^2 + \Ricc\left( \frac{Du}{W},\frac{Du}{W} \right) \right) W^{-1} = 0
\]
where $W=\sqrt{1+|Du|^2}$, motivating once more the interest in studying the behaviour of solutions under the sole assumption $\Ricc \ge 0$. However, (ii) entails to study $\Delta_g r$, which in view of the minimality of $\Sigma$ can be written as 
\[
\Delta_g r = g^{ij}(D^2r)_{ij}
\] 
in local coordinates $\{x^i\}$ on $M$. The operator $\Delta_g$ fails to be uniformly elliptic if $|Du|$ is unbounded. In particular, while the lower bound $\Sec \ge 0$ allows to estimate $\Delta_g r$ by an expression like the one in \eqref{eq_deltar}, and therefore to adapt most of the arguments known for harmonic functions, the bound $\Ricc \ge 0$ alone seems insufficient, and calls for new ideas. On the other hand, the techniques in \cite{bdgm,bg} to get $(\ber 2),(\ber 3)$ in Euclidean space heavily rely on the fact that $\Sigma$ enjoys the isoperimetric inequality
\[
\left(\int_\Sigma \phi^{\frac{m}{m-1}}\right)^{\frac{m-1}{m}} \le \mathscr{S} \int_\Sigma \|\nabla \phi\| \qquad \forall \, \phi \in \lip_c(\Sigma), 
\]
which is known to fail in our setting unless $M$ has maximal volume growth, namely, balls $B_R\subset (M,\sigma)$ centered at a fixed origin satisfy
\[
\lim_{R \to \infty} \frac{\vol(B_R)}{R^m} > 0.
\]
New approaches able to overcome the aforementioned problems are, to our opinion, interesting on their own, and may allow to application to other relevant PDEs. \par

The validity of $(\ber 2)$ on complete manifolds with $\Ricc \ge 0$ was recently shown by the authors in a joint paper with Magliaro \cite{cmmr}, and also independently by Ding \cite{ding21} with different methods. The result improves on previous work of Rosenberg, Schulze and Spruck \cite{rss13} by getting rid of the technical assumption that the sectional curvature of $(M,\sigma)$ be bounded from below. We have:

\begin{theorem*}[\cite{cmmr},\cite{ding21}]
	Let $M$ be a complete, connected Riemannian manifold with $\Ricc\geq0$. Then any non-negative solution to \eqref{MSE} on $M$ is constant.
\end{theorem*}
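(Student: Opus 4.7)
The plan is to pass from the PDE on $M$ to an intrinsic problem on the minimal graph $\Sigma := \Gamma_u(M) \subset M \times \R$ endowed with the induced metric $g$. The minimality of $\Sigma$ translates both into the intrinsic harmonicity $\Delta_g u = 0$ and into the Jacobi identity
\[
\Delta_g \theta \,+\, \left( \|\II\|^2 \,+\, \Ricc\!\left( \tfrac{Du}{W}, \tfrac{Du}{W} \right) \right) \theta \;=\; 0, \qquad \theta := \frac{1}{W}, \ W := \sqrt{1+|Du|^2},
\]
for the vertical component $\theta$ of the unit normal. Under $\Ricc \ge 0$ this rewrites, setting $\log W = -\log \theta$, as the cornerstone pointwise inequality $\Delta_g \log W \ge \|\II\|^2 + |\nabla_g \log W|_g^2 \ge |\nabla_g \log W|_g^2$. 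Testing it against a Lipschitz, compactly supported $\phi^2$ on $\Sigma$ and using Young's inequality yields a Caccioppoli-type estimate
\[
\int_\Sigma \phi^2\, |\nabla_g \log W|_g^2\, \di\vol_g \;\le\; C \int_\Sigma |\nabla_g \phi|_g^2 \,\di\vol_g.
\]

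The test function must be built from the ambient distance $r$ on $M$, not from the intrinsic distance on $\Sigma$: unlike the case $\Sec \ge 0$ treated by Rosenberg--Schulze--Spruck, the bound $\Ricc \ge 0$ alone does not yield comparison bounds for $\Delta_g$ of the intrinsic distance, because $\Delta_g$ fails to be uniformly elliptic when $|Du|$ is unbounded. The ambient distance instead satisfies $|\nabla_g r|_g \le 1$, and the Jacobian identity $\di\vol_g = W\,\di\vol_M$ converts intrinsic integrals on $\Sigma$ into weighted integrals on $M$. A naive cutoff $\phi = \eta(r)$ is insufficient, as it produces on the right hand side the a priori uncontrolled quantity $(C/R^2)\int_{B_{2R}} W\,\di\vol_M$. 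The assumption $u \ge 0$ enters precisely here: one tests instead with $\phi = \eta(r)\,\chi(u)$ for a bounded, non-increasing Lipschitz function $\chi$, so that the extra term $\eta^2 \,(\chi'(u))^2 |\nabla_g u|_g^2$ is controlled by $|\nabla_g u|_g^2 = |Du|^2/W^2 \le 1$, while an integration by parts using the harmonic identity $\Delta_g u = 0$ converts $\int \chi^2 W\,\di\vol_M$ into a purely ambient integral on $M$.

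Once reduced, Bishop--Gromov volume comparison under $\Ricc \ge 0$, giving $\vol(B_R) = \mathcal{O}(R^m)$, controls the right hand side as $R \to \infty$, and letting $\chi$ tend to $1$ along an appropriate family forces $\nabla_g \log W \equiv 0$ together with $\|\II\| \equiv 0$. Hence $|Du|$ is constant and $\Sigma$ is totally geodesic; combined with $\Delta_g u = 0$ and the hypothesis $u \ge 0$, this forces $u$ to be constant. The main obstacle is the construction of the weighted test function: the non-negativity of $u$ must be used quantitatively to compensate for the possibly wild growth of $W$, and striking the correct balance between the two gradient contributions (in $\log W$ and in $u$) so that all error terms are absorbed on the left is the delicate new technical ingredient required beyond the arguments available for $\Sec \ge 0$.
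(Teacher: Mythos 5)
Your scheme does not close at the crucial quantitative step. The pointwise inequality $\Delta_g \log W \ge \|\II\|^2 + \|\nabla \log W\|^2$ and the resulting Caccioppoli estimate are correct, but to deduce $\nabla\log W \equiv 0$ (and $\II\equiv 0$) you need the right-hand side $\int_\Sigma \|\nabla_g\phi\|^2\,\di v_g$ to tend to $0$ along your family of test functions, and with $\phi=\eta(r)\chi(u)$ this is not achievable. Indeed, both error terms carry the weight $\di v_g = W\,\di v$: the term $\chi(u)^2\|\nabla_g\eta\|^2$ contributes about $R^{-2}\int_{B_{2R}\cap\{0\le u\le T\}}W\,\di v$, and the term $\eta^2\chi'(u)^2\|\nabla_g u\|^2$ contributes about $T^{-2}\int_{B_{2R}\cap\{T\le u\le 2T\}}W\,\di v$ (if $\chi$ decays on the slab $T\le u\le 2T$). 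The best available bound for these graph-volumes of slabs is the calibration-type estimate $\vol_g(\Sigma_{R,a,b})\lesssim \vol(B_R)+(b-a)\,\vol(B_{2R})/R$ (the paper's first lemma; your ``integration by parts using $\Delta_g u=0$'' yields exactly this and nothing stronger), which together with Bishop--Gromov gives $\lesssim R^m(1+T/R)$. Optimizing in $T$ (e.g. $T\sim R$) leaves a right-hand side of order $R^{m-2}$, which diverges for $m\ge 3$; letting $\chi\to 1$ only makes matters worse. So ``striking the correct balance'' is not a technical refinement left to the reader: no choice of $\eta,\chi$ in this two-parameter family makes the error vanish, and this is precisely why the known proofs are substantially more involved.

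For comparison, the mechanism that actually works (in the present paper, which reproves this statement as a special case of its Theorem A, and in \cite{cmmr}) replaces $\log W$ by Korevaar's function $z=We^{-Cu}$, whose differential inequality $\div_g(W^{-2}\nabla z)\ge C^2\|\nabla u\|^2W^{-2}z$ produces the favourable measure $e^{-2Cu}\,\di v_g$; the cutoff is raised to a large power $\alpha\sim Rh(R)$, which after H\"older's inequality yields an exponentially small factor $e^{-\mu Rh(R)}$ beating the polynomial volume growth, and the weighted volume of $\Sigma_R$ is controlled by a Bombieri--De Giorgi--Miranda-type lemma. Even then one only obtains $\sup_M W<\infty$, and a second, independent step (uniform ellipticity of $W\Delta_g$, Saloff-Coste's Harnack inequality, a Caccioppoli estimate and a Li-type mean value property, as in Lemma \ref{lem_liou}; or, for $u\ge 0$, Ding's Harnack inequality or Yau-type arguments) is needed to conclude constancy. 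Your final reduction (``totally geodesic plus $u\ge0$ implies constant'') would be fine if the first step worked, but as it stands the core gradient/rigidity step is unsupported.
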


\noindent The proof in \cite{cmmr} is based on a new global gradient estimate for positive solutions to \eqref{MSE} on complete manifolds with Ricci curvature bounded below by a (possibly negative) constant. The estimate is obtained by using, in place of the distance function $r$, a different exhaustion function constructed by means of potential theory and a duality principle recently established in \cite{marivaltorta, maripessoa}. By contrast, Ding's argument in \cite{ding21} is based on a new Harnack-type inequality for positive solutions to \eqref{MSE}. In fact, he considers complete manifolds satisfying global doubling and weak Neumann-Poincar\'e inequalities, a class including those with $\Ricc \ge 0$. In his paper, Ding also remarks that from the same Harnack inequality, by applying standard methods first outlined in \cite{mos61}, it is possible to infer the existence of $\delta>0$ such that if $u$ is a solution to \eqref{MSE} satisfying 
\begin{equation}\label{eq_odelta}
u_-(x) = o\big(r(x)^\delta\big) \qquad \text{as } \, r(x) \to \infty
\end{equation}
then $u$ is constant. However, he does not provide any estimate on the sharp value of $\delta$. In this respect, our main Theorem \ref{thm_main2} improves on \cite{cmmr,ding21} and in particular shows the constancy of $u$ whenever \eqref{eq_odelta} holds for any $\delta\in(0,1)$. Indeed, Theorem \ref{thm_main2} goes in the direction of proving $(\ber 2')$, failing only by a logarithmic term.

We conclude this introduction by briefly discussing properties $(\ber 2')$ and $(\ber 3)$ on manifolds with $\Ricc \ge 0$. First, $(\ber 3)$ does not hold in the same strong form as for manifolds with $\Sec \ge 0$. Indeed, by \cite[Proposition 9]{cgmr} there exists a complete manifold with $\Ricc > 0$ (hence, not splitting off any line) supporting an entire solution to \eqref{MSE} with bounded gradient. This is not surprising, since by an example in  \cite{kasuewashio} the same happens for solutions to $\Delta u = 0$, and in fact \cite{cgmr} elaborates on \cite{kasuewashio}. Then, the analogy with the case of harmonic functions discussed in \cite{ccm95} led us to formulate the following:
\begin{conjecture}
	If $M$ is complete with $\Ricc \ge 0$, and if $u$ is a non-constant solution to \eqref{MSE} satisfying \eqref{eq_linear_oneside}, then every tangent cone at infinity $M_\infty$ splits isometrically as $M_\infty = N_\infty \times \R$.
\end{conjecture}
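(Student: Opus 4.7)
Since the statement is a conjecture rather than an established result, the plan can only sketch a strategy. The overall aim is, for any tangent cone $M_\infty$ at infinity of $M$, to produce a line, so that the splitting theorem for $\RCD(0,N)$ spaces (Gigli) yields the desired isometric factorization $M_\infty = N_\infty \times \R$. Existence of $M_\infty$ as a pmGH subsequential limit of the rescaled spaces $(M, r_i^{-1}d, o, r_i^{-m}\vol)$ with $r_i\to\infty$ follows from $\Ricc\ge0$ and Gromov precompactness, and such a limit is automatically $\RCD(0,m)$ by Cheeger--Colding.

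Normalize $u(o)=0$ and set $u_i(x):=r_i^{-1}u(x)$, viewed as a function on $M_i:=(M,r_i^{-1}d)$. This scaling preserves \eqref{MSE}, and hypothesis \eqref{eq_linear_oneside} ensures that $(u_i)_-$ is uniformly bounded on each ball of $M_i$ centered at $o$. If a \emph{uniform local Lipschitz bound} for the $u_i$ can be established, Arzel\`a--Ascoli in the Cheeger--Colding framework provides, up to a further subsequence, a Lipschitz limit $u_\infty : M_\infty \to \R$. To keep $u_\infty \not\equiv 0$, the scales $r_i$ should be selected so that $r_i^{-1}\sup_{B_{r_i}(o)}|u|$ stays bounded away from zero, which would follow once one knows that $u$ genuinely realizes its linear upper bound on some sequence of points.

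The crux is then to show that $u_\infty$ is \emph{affine} on $M_\infty$, i.e.\ linear of constant slope along every minimizing geodesic. My strategy would combine two ingredients. First, integrating the Jacobi-type identity
\[
\Delta_g W^{-1} + \Bigl( \|\II\|^2 + \Ricc\!\left( \tfrac{Du}{W}, \tfrac{Du}{W} \right) \Bigr) W^{-1} = 0
\]
against well-chosen cutoffs should yield, via a Caccioppoli bound, an $L^2$ smallness statement for $\|\II\|$ on the rescaled graphs $r_i^{-1}\Sigma$; in the limit, the graph of $u_\infty$ should be totally geodesic in $M_\infty \times \R$, forcing $u_\infty$ affine. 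Second, one could try to adapt the Cheeger--Colding splitting proof for linearly growing harmonic functions on $\Ricc\ge 0$ manifolds: on the intrinsic graph metric $g$, the function $u$ is $\Delta_g$-harmonic and $1$-Lipschitz, and these two properties should transfer to the limit provided the Dirichlet energies converge. Once $u_\infty$ is non-constant affine on $M_\infty$, its gradient flow produces a line, closing the argument.

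The main obstacle is the local gradient estimate. A bound of the form
\[
|Du|(x) \le F\!\left( R^{-1} \sup_{B_{2R}(x)} u_- \right) \quad \text{on } B_R(x)
\]
for solutions of \eqref{MSE} on manifolds with $\Ricc \ge 0$ is presently out of reach by the techniques of \cite{cmmr, ding21} or the present paper, and would already imply the sharp property $(\ber 2')$, hence exceeding Theorem~\ref{thm_main2}. A weaker but possibly more tractable substitute is to replace pointwise convergence with varifold or current convergence of $r_i^{-1}\Sigma \subset M_i \times \R$ in the Cheeger--Colding sense, and to extract a minimal limit object in $M_\infty\times\R$; this, however, requires a theory of minimal graphs over $\RCD$ limit spaces that is not yet developed, and constitutes a project of its own.
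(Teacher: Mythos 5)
This statement is stated in the paper as an open conjecture, not a theorem: the paper proves it only under the additional hypothesis $|Du|\in L^\infty(M)$ (via \cite[Theorem 8]{cgmr}) and explicitly reduces the general case to the unproved implication \eqref{B2Du}. Your proposal, as you yourself acknowledge, is a strategy sketch rather than a proof, and its decisive step --- the ``uniform local Lipschitz bound'' for the rescaled functions $u_i$ --- is precisely this missing implication (indeed a localized form of it). Once such a gradient bound is granted, the blowdown analysis, the harmonicity of $u$ in the graph metric, and the splitting of the limit are already carried out in \cite[Theorem 8]{cgmr}, so your plan does not go beyond the known conditional result: the entire content of the conjecture is concentrated in the step you defer. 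The paper itself stresses that \eqref{B2Du} is currently known only under extra geometric hypotheses (quadratic decay of a lower Ricci-type bound, or maximal volume growth), and that even the weaker hypothesis \eqref{uArg} of Theorem \ref{thm_main2}, strictly stronger than \eqref{eq_linear_oneside}, is what the new integral technique can handle; so there is no route in the present literature to the estimate your plan requires.

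Beyond this central gap, two subsidiary points in the sketch would need repair even conditionally. First, the conjecture concerns \emph{every} tangent cone at infinity, so the scales $r_i$ are prescribed by the chosen cone; you are not free to select scales along which $r_i^{-1}\sup_{B_{r_i}}|u|$ stays bounded away from zero, and the nondegeneracy of the blowdown $u_\infty$ must instead be derived (in \cite{cgmr} it follows from the global gradient bound together with Harnack-type arguments, and it is there that one obtains $u_\infty(y,t)=\|Du\|_\infty t$). Second, the claim that a Caccioppoli estimate on the Jacobi equation yields $L^2$-smallness of $\|\II\|$ on the rescaled graphs, hence a totally geodesic limit and an affine $u_\infty$, is unsubstantiated: integrating the stability-type identity against cutoffs controls $\|\II\|$ only against quantities (e.g.\ the oscillation of $W^{-1}$ or weighted volumes) that are not known to be small under mere one-sided linear growth, and without the gradient bound one cannot even normalize $W^{-1}$ away from $0$. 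So the proposal correctly identifies the obstruction but does not close, or materially narrow, the gap between the known partial results and the conjecture.
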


In \cite[Theorem 8]{cgmr}, the conjecture was verified under the stronger assumption that $u$ is globally Lipschitz. In fact, in the proof of the theorem it is also shown that any blowdown $u_\infty : M_\infty \to \R$ of $u$ corresponding to a tangent cone $M_\infty = N_\infty \times \R$ satisfies $u_\infty(y,t) = \|Du\|_\infty t$, from which one can deduce that the original function $u$ has exactly linear growth on both sides. In particular, under the additional assumption $|Du|\in L^\infty(M)$, $(\ber 2')$ holds as well; indeed, the latter fact also follows from a more direct argument, sketched in the proof of \cite[Theorem 11]{cgmr}, which we describe in detail in Lemma \ref{lem_liou} below.

In view of \cite[Theorem 8]{cgmr}, if we could prove that
\begin{equation} \label{B2Du}
	\text{$u$ satisfies \eqref{eq_linear_oneside}} \qquad \Longrightarrow \qquad |Du| \in L^\infty(M),
\end{equation}
then the conjecture would be verified, and due to the above observation $(\ber 2')$ would follow too. Currently, the implication \eqref{B2Du} was only shown under further geometric assumptions, precisely:
\begin{itemize}
\item[$(i)$] in \cite[Corollary 18]{cgmr}, \eqref{B2Du} was proved in case the $(m-2)$-th Ricci curvature of $(M, \sigma)$ in radial direction from a fixed origin has a lower bound decaying quadratically to zero;
\item[$(ii)$] in \cite{ding_new}, \eqref{B2Du} was proved in case $(M,\sigma)$ has maximal volume growth, as a consequence of Theorem 1.3 there.
\end{itemize}
Note that the two extra assumptions in $(i),(ii)$ are unrelated. Compared to \cite{cgmr,ding_new}, the core of the proof of Theorem \ref{thm_main2} is to obtain a global gradient estimate for $u$ under the validity of \eqref{uArg}, without any extra condition on $M$. The technique is based on integral estimates that, differently from \cite{bdgm,bg,ding_new}, do not require the validity of an isoperimetric inequality on $\Sigma$ and, unlike \cite{tru72,korevaar,rss13,cgmr}, do not use second order properties of the distance function $r$. It might be possible that our method be refined to get global gradient bounds under \eqref{eq_linear_oneside}.

As a partial result in the attempt to prove $(\ber 2')$, in the last part of the paper we obtain an asymptotic upper estimate of the measure of superlevel sets of the function $|Du|$ in case $u$ is an entire solution to \eqref{MSE} satisfying a two-sided bound $|u|=o(r)$. We show that
\[
	\lim_{R\to \infty} \frac{\vol(\{|Du|>\eps\}\cap B_R)}{\vol(B_R)} = 0 \qquad \forall \, \eps>0 \, .
\]
We refer to Corollary \ref{cor_q1} for a precise statement. We also point out that, as a consequence of Ding's Harnack-type inequality for positive solutions of \eqref{MSE} proved in \cite{ding21}, the bound $|u|=o(r)$ is implied by the weaker, one-sided condition $u_- = o(r)$.

\section{Preliminary lemmas}

Let $(M,\sigma)$ be a connected, complete Riemannian manifold of dimension $m\geq2$. We denote by $|\cdot|$, $D$, $\div$, $\vol$ and $\di v\doteq\di\vol$ the vector norm, gradient operator, divergence operator, volume measure and volume form induced by $\sigma$, respectively. For $u : M \to \R$ a smooth function, we denote by
\[
	\Gamma_u : M \to M \times \R, \qquad x \mapsto (x,u(x))
\]
the graph map, and by $\Sigma = \Gamma_u(M)$ the graph of $u$. Having chosen the ambient product metric $\bar\sigma = \sigma + \di t^2$, we let $g = \Gamma_u^* \bar \sigma$ the induced graph metric on $\Sigma$, which is therefore isometric to $(M,g)$. We denote by $\|\cdot\|$, $\nabla$, $\div_g$, $\Delta_g$, $\vol_g$ and $\di v_g\doteq\di\vol_g$ the vector norm, gradient, divergence, Laplace-Beltrami operator, volume measure and volume form induced by $g$, respectively. The metric $g$ and its volume form $\di v_g$ satisfy
\[
	g = \sigma + \di u \otimes \di u \, , \qquad \di v_g = W \di v
\]
where $W\in C^\infty(M)$ is defined, as usual, by
\begin{equation} \label{Wdef}
	W = \sqrt{1+|Du|^2} \, .
\end{equation}

With respect to any given local coordinate system $\{x^i\}$ on $M$ we write
\[
	\sigma = \sigma_{ij} \, \di x^i \otimes \di x^j \, , \qquad g = g_{ij} \, \di x^i \otimes \di x^j
\]
where the Einstein convention of summation over repeated indices is in force. Letting $\sigma^{ij}$ and $g^{ij}$ denote the coefficients of the inverse matrices $(\sigma_{ij})^{-1}$ and $(g_{ij})^{-1}$, respectively, we have
\begin{equation} \label{gij}
	g^{ij} = \sigma^{ij} - \frac{u^i u^j}{W^2}
\end{equation}
where $u^i = \sigma^{ij} \frac{\partial u}{\partial x^j}$. Using \eqref{gij} and \eqref{Wdef} we compute the norm of the gradient of $u$ in the graph metric $g$ to be
\begin{equation} \label{nablau}
	\|\nabla u\|^2 = g^{ij} \frac{\partial u}{\partial x^i} \frac{\partial u}{\partial x^j} = \frac{1}{W^2} \sigma^{ij} \frac{\partial u}{\partial x^i} \frac{\partial u}{\partial x^j} = \frac{|Du|^2}{W^2} \equiv 1 - \frac{1}{W^2} \, .
\end{equation}
By \eqref{gij} and \eqref{Wdef} we also have, for any $\psi\in C^1(M)$, the following useful formulas involving the gradients $D\psi$ and $\nabla\psi$ computed with respect to the metrics $\sigma$ and $g$, respectively:
\begin{align}
	\label{nabla_pu}
	\langle\nabla\psi,\nabla u\rangle & = g^{ij} \frac{\partial u}{\partial x^i} \frac{\partial\psi}{\partial x^j} = \left(1-\frac{|Du|^2}{W^2}\right) \sigma^{ij} \frac{\partial u}{\partial x^i} \frac{\partial\psi}{\partial x^j} = \frac{\sigma(Du,D\psi)}{W^2} \, , \\
	\|\nabla\psi\|^2 & = |D\psi|^2 - \frac{\sigma(D\psi,Du)^2}{W^2} \, .
\end{align}
From the second one we have the two-sided estimate
\begin{equation}
	\frac{|D\psi|^2}{W^2} \leq \|\nabla\psi\|^2 \leq |D\psi|^2 \, .
\end{equation}
Lastly, for any $\psi\in C^2(M)$ we have the following well-known expression for the graph Laplacian $\Delta_g$
\begin{equation}
	\Delta_g \psi = \frac{1}{\sqrt{|g|}} \sum_{i,j=1}^m \frac{\partial}{\partial x^i} \left( \sqrt{|g|} g^{ij}\frac{\partial\psi}{\partial x^j}\right) \equiv \frac{1}{W\sqrt{|\sigma|}} \sum_{i,j=1}^m \frac{\partial}{\partial x^i} \left( W \sqrt{|\sigma|} g^{ij}\frac{\partial\psi}{\partial x^j}\right)
\end{equation}
where $|g| = \det(g_{ij}) = W^2\det(\sigma_{ij}) = W^2|\sigma|$. In other words,
\[
	\Delta_g \psi = \frac{1}{W} \div(\mathbf{A}D\psi)
\]
where $\mathbf{A}$ is the endomorphism of $TM$ of components $\mathbf{A}^i_k = W g^{ij}\sigma_{jk}$. We remark that the eigenvalues of $\mathbf{A}$ are $W$ and $W^{-1}$. Hence, whenever $|Du|\in L^\infty(M)$ we have that $W\Delta_g$ is a uniformly elliptic second order operator in divergence form on $(M,\sigma)$. This will be crucial in the last part of the proof of the main theorem.

From now on, we always assume that $u$ is a solution of the minimal hypersurface equation \eqref{MSE}, which is equivalent to $\Delta_g u = 0$ on $\Sigma$. Since $\partial_t$ is a Killing field, the positive function $1/W$ solves the Jacobi equation
\begin{equation} \label{Jac}
	\Delta_g \frac{1}{W} = - (\|\II\|^2 + \Ricc_{\bar\sigma}(\mathbf{n},\mathbf{n})) \frac{1}{W} \qquad \text{on } \, \Sigma
\end{equation}
where $\II$ is the second fundamental form of $\Sigma$ and $\mathbf{n}$ is a normal vector field to $\Sigma$ in $M\times\R$. As a consequence, for any $C>0$ the function
\begin{equation} \label{z_def}
	z := W e^{-Cu}
\end{equation}
satisfies (see \cite[formula (33) with $H=0$]{cmr21})
\[
	\div_g(W^{-2}\nabla z) = \left(\|\II\|^2 + \Ricc_{\bar\sigma}(\mathbf{n},\mathbf{n}) + C^2\|\nabla u\|^2 \right) W^{-2} z \qquad \text{on } \, \Sigma \, .
\]
In case $(M,\sigma)$ has $\Ricc\geq 0$ this yields
\begin{equation} \label{divWz}
	\div_g(W^{-2}\nabla z) \geq C^2\|\nabla u\|^2 W^{-2} z \qquad \text{on } \, \Sigma \, .
\end{equation}
The idea of considering an auxiliary function of the form \eqref{z_def} dates back to Korevaar, \cite{korevaar}.

We now fix an origin $o\in M$ and set $r(x) = \dist_\sigma(o,x)$ for each $x\in M$. For any $R>0$ we denote
\[
	B_R = B_R^\sigma(o) = \{ x\in M : r(x) < R \}
\]
the geodesic ball in $(M,\sigma)$ of radius $R$ centered at $o$, and by
\begin{equation}
	\Sigma_R = \{ (x,u(x))  : x\in B_R \}
\end{equation}
the intersection of $\Sigma$ with the infinite cylinder $B_R\times\R\subseteq M\times\R$. For all $-\infty<a<b<\infty $ we also set
\begin{equation} \label{SRab}
	\Sigma_{R,a,b} = \{ (x,u(x)) : x \in B_R , \, a<u(x)<b \} \, .
\end{equation}

We conclude this section by collecting three lemmas needed in the proof of the main result. The first two lemmas give upper estimates on the measure $\vol_g(\Sigma_{R,a,b})$ and on the integral of $e^{-Cu}\di v_g$ on $\Sigma_R$ in case $(M,\sigma)$ is any complete Riemannian manifold, in the spirit, respectively, of \cite{tru72} (see also \cite[Theorem 3]{mir67}) and \cite{bdgm}. The third lemma provides a weighted Caccioppoli-type inequality for $u$ on superlevel sets of the auxiliary function $z$ in the assumption that $(M,\sigma)$ is also of non-negative Ricci curvature.

We briefly comment on the geometrical meaning of the upper estimates \eqref{Vest0}-\eqref{Vest2} for $\vol_g(\Sigma_{R,a,b})$ given by the first lemma below. For any $R>0$ and $-\infty<a<b<\infty $, the set $\Sigma_{R,a,b}$ defined in \eqref{SRab} is the intersection of $\Sigma$ with the cylinder
\[
	\Gamma_{R,a,b} = B_R \times (a,b)
\]
of $M\times\R$. By a classical calibration argument (see, for instance, the first part of the proof of Lemma 3.2 in \cite{cmmr}), the measure $\vol_g(\Sigma_{R,a,b}) \equiv \vol_g(\Sigma\cap\Gamma_{R,a,b})$ is not larger than half of the perimeter of the surrounding cylinder $\Gamma_{R,a,b}$, so for a.e.~$R>0$ we have
\[
	\vol_g(\Sigma_{R,a,b}) \leq \vol(B_R) + \frac{b-a}{2} \, \area(\partial B_R) \, .
\]
A quite standard variation of this argument yields an analogous estimate in which $\area(\partial B_R)$ is replaced with a term depending on the volume $\vol(B_{R_1})$ of a second ball of larger radius $R_1>R$. This turns out to be useful in case only a control on the growth of the function $R\mapsto\vol(B_R)$ is available.

\begin{lemma}
	Let $(M,\sigma)$ be a complete Riemannian manifold, $u\in C^\infty(M)$ a solution to \eqref{MSE} and $o\in M$ a fixed origin. For any $-\infty<a<b<\infty $ and $R_1>R>0$ we have
	\begin{equation} \label{Vest0}
		\vol_g(\Sigma_{R,a,b}) \leq \vol(B_R) + \frac{b-a}{2} \frac{\vol(B_{R_1}\setminus B_R)}{R_1-R}
	\end{equation}
	where $\Sigma_{R,a,b}$ is as in \eqref{SRab}. In particular, for any $-\infty<a<b<\infty $ we have
	\begin{equation} \label{Vest1}
		\vol_g(\Sigma_{R,a,b}) \leq \vol\left(B_{R+\frac{1}{2}(b-a)}\right) \qquad \forall \, R>0
	\end{equation}
	and also
	\begin{equation} \label{Vest2}
		\vol_g(\Sigma_{R,a,b}) \leq \vol(B_R) + \frac{b-a}{2} \, \area(\partial B_R) \qquad \text{for a.e.~} R>0 \, .
	\end{equation}
\end{lemma}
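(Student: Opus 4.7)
The plan is to first establish the Caccioppoli-type inequality
\[
\int_{\{a<u<b\}} \phi W \, \di v \;\le\; \int_{\{a<u<b\}} \phi \, \di v \;+\; \frac{b-a}{2} \int_M |D\phi| \, \di v \qquad (\star)
\]
for any Lipschitz, compactly supported $\phi : M \to [0,\infty)$, and then to derive \eqref{Vest0}, \eqref{Vest1} and \eqref{Vest2} by a careful choice of $\phi$ in $(\star)$. To prove $(\star)$, I would test \eqref{MSE} against $\eta = T(u)\phi$, where $T(s) = \min\{\max\{s,a\},b\} - (a+b)/2$ is the clipped affine function, satisfying $|T(u)| \le (b-a)/2$ and $DT(u) = \mathbf{1}_{\{a<u<b\}} Du$ almost everywhere. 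Multiplying $\div(Du/W) = 0$ by $T(u)\phi$ and integrating by parts gives
\[
\int_{\{a<u<b\}} \phi \, \frac{|Du|^2}{W} \, \di v \;=\; -\int_M T(u) \, \frac{\sigma(Du,D\phi)}{W} \, \di v,
\]
whose right-hand side is bounded in absolute value by $\tfrac{b-a}{2} \int_M |D\phi| \, \di v$ using $|Du|/W \le 1$. Combined with the pointwise identity $W - 1 = |Du|^2/(W+1) \le |Du|^2/W$, this yields $(\star)$.

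The crucial step for \eqref{Vest0} is to apply $(\star)$ with the radial cutoff
\[
\phi(x) \;=\; \min\!\left\{1,\; \max\!\left\{0,\; \tfrac{R_1-r(x)}{R_1-R}\right\}\right\},
\]
which equals $1$ on $B_R$, vanishes outside $B_{R_1}$, and, since $|Dr| \le 1$ a.e., has $|D\phi| \le (R_1-R)^{-1}$ almost everywhere, so that $\int_M |D\phi| \, \di v \le \vol(B_{R_1}\setminus B_R)/(R_1-R)$. The decisive observation is that $\phi \equiv 1$ on $B_R$ and $W \ge 1$ everywhere give the lower bound
\[
\int_{\{a<u<b\}} \phi W \, \di v \;\ge\; \vol_g(\Sigma_{R,a,b}) \;+\; \int_{(B_{R_1}\setminus B_R) \cap \{a<u<b\}} \phi \, \di v.
\]
Inserting this into $(\star)$, the annular contribution of $\int \phi \, \di v$ cancels exactly from both sides, and since $\phi = 1$ on $B_R$ we are left with
\[
\vol_g(\Sigma_{R,a,b}) \;\le\; \int_{B_R \cap \{a<u<b\}} \phi \, \di v + \frac{b-a}{2} \, \frac{\vol(B_{R_1}\setminus B_R)}{R_1-R} \;\le\; \vol(B_R) + \frac{b-a}{2} \, \frac{\vol(B_{R_1}\setminus B_R)}{R_1-R},
\]
which is \eqref{Vest0}.

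Finally, \eqref{Vest1} is obtained from \eqref{Vest0} by choosing $R_1 = R + (b-a)/2$, for which the right-hand side simplifies to $\vol(B_R) + \vol(B_{R_1} \setminus B_R) = \vol(B_{R+(b-a)/2})$; and \eqref{Vest2} follows by letting $R_1 \downarrow R$ and invoking the coarea formula, which gives $\vol(B_{R_1}\setminus B_R)/(R_1-R) \to \area(\partial B_R)$ for almost every $R > 0$. The main delicate point I foresee is the justification of the integration by parts with the Lipschitz test function $T(u)\phi$, which is routine via the chain rule for Lipschitz compositions and a standard mollification, but must be checked; the remainder of the argument, once $(\star)$ is in hand, uses no analytic input beyond the elementary bounds $W \ge 1$ and $|Du|/W \le 1$.
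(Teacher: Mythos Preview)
Your proof is correct and follows essentially the same route as the paper's: both test \eqref{MSE} against the product of the clipped affine function $T(u)$ with the radial Lipschitz cut-off supported in $B_{R_1}$, and both bound the cross term by $\tfrac{b-a}{2}\int|D\phi|$. The one noteworthy difference is your cancellation of the annular contribution $\int_{(B_{R_1}\setminus B_R)\cap\{a<u<b\}}\phi\,\di v$ using $W\ge 1$, which delivers exactly the stated bound with $\vol(B_R)$; the paper's own argument, as written, estimates $\int_M \tfrac{\psi}{W}\mathbf{1}_{\{a<u<b\}}\,\di v \le \vol(B_{R_1})$ and therefore lands on the slightly weaker inequality with $\vol(B_{R_1})$ in place of $\vol(B_R)$ (though this has no effect on the subsequent corollaries \eqref{Vest1}--\eqref{Vest2}).
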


\begin{proof}
	Let $\psi : M\to[0,1]$ be the Lipschitz cut-off function defined by
	\[
	\psi = \begin{cases}
		1 & \quad \text{on } \, B_R \\
		\dfrac{R_1-r}{R_1-R} & \quad \text{on } \, B_{R_1}\setminus B_R \\
		0 & \quad \text{on } \, M\setminus B_{R_1}
	\end{cases}
	\]
	and let $\hat u = \chi\circ u$, where $\chi : \R \to \R$ is given by
	\[
	\chi(s) = \begin{cases}
		\frac{1}{2}(a-b) & \quad \text{if } s \leq a \\
		s-\frac{1}{2}(a+b) & \quad \text{if } \, a<s<b \\
		\frac{1}{2}(b-a) & \quad \text{if } \, s \geq b \, .
	\end{cases}
	\]
	Then using $\varphi = \hat u\psi$ as a test function for the equation $\Delta_g u = 0$ we get
	\begin{align*}
		0 & = - \int_\Sigma \varphi \Delta_g u \, \di v_g \\
		& = \int_\Sigma \langle\nabla\varphi,\nabla u\rangle \, \di v_g \\
		& = \int_\Sigma \psi \|\nabla u\|^2 \mathbf{1}_{\{a<u<b\}} \, \di v_g + \int_\Sigma \hat u \langle\nabla\psi,\nabla u\rangle \, \di v_g \, .
	\end{align*}
	Using $\|\nabla u\|^2 = 1-W^{-2}$ and $\di v_g = W\di v$ we have
	\begin{align*}
		\int_\Sigma \psi\|\nabla u\|^2 \mathbf{1}_{\{a<u<b\}} \, \di v_g & = \int_\Sigma \psi \mathbf{1}_{\{a<u<b\}} \, \di v_g - \int_M \frac{\psi}{W} \mathbf{1}_{\{a<u<b\}} \, \di v \\
		& \geq \vol_g(\Sigma_{R,a,b}) - \vol(B_{R_1}) \, .
	\end{align*}
	On the other hand, since $\langle\nabla\psi,\nabla u\rangle = g^{ij}\psi_i u_j = W^{-2}\sigma^{ij}\psi_i u_j = W^{-2}(D\psi,D u)$ by \eqref{nabla_pu}, we also have
	\begin{align*}
		\int_\Sigma \hat u \langle\nabla\psi,\nabla u\rangle \, \di v_g & = \int_M \hat u \frac{(D\psi,Du)}{W} \, \di v \\
		& \geq - \int_M |\hat u||D\psi| \,\di v \\
		& \geq - \frac{b-a}{2} \frac{\vol(B_{R_1}\setminus B_R)}{R_1-R}
	\end{align*}
	and putting together all estimates we obtain
	\[
		\vol_g(\Sigma_{R,a,b}) \leq \vol(B_{R_1}) + \frac{b-a}{2} \frac{\vol(B_{R_1}\setminus B_R)}{R_1-R}
	\]
	that is, \eqref{Vest0}. Inequality \eqref{Vest1} can then be deduced from \eqref{Vest0} choosing $R_1 = R+\frac{1}{2}(b-a)$ and using that $\vol(B_{R_1}\setminus B_R) = \vol(B_{R_1})-\vol(B_R)$, while \eqref{Vest2} follows from \eqref{Vest0} since for a.e.~$R>0$ the limit
	\[
		\lim_{\eps\to0} \frac{\vol(B_{R+\eps})-\vol(B_R)}{\eps}
	\]
	exists and equals $\area(\partial B_R)$.
\end{proof}

The next Lemma elaborates on arguments in \cite[p. 264]{bdgm}.

\begin{lemma} \label{lem_eCu}
	Let $(M,\sigma)$ be a complete Riemannian manifold, $u\in C^\infty(M)$ a solution to \eqref{MSE} and $o\in M$ a fixed origin. For any $C>0$, $\lambda>0$ and $R>0$ we have
	\begin{equation} \label{Iest}
		\int_{\Sigma_R} e^{-2Cu} \, \di v_g \leq \exp\left(-2C\inf_{B_R} u\right) \frac{\vol(B_{(1+\lambda)R})}{1-e^{-4\lambda CR}}
	\end{equation}
	and for a.e.~$R>0$ we also have
	\begin{equation} \label{Iest'}
		\int_{\Sigma_R} e^{-2Cu} \, \di v_g \leq \exp\left(-2C\inf_{B_R} u\right) \frac{\vol(B_R)+\lambda R\,\area(\partial B_R)}{1-e^{-4\lambda CR}} \, .
	\end{equation}
\end{lemma}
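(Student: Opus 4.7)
The plan is to recognize the factor $(1-e^{-4\lambda CR})^{-1}$ appearing in \eqref{Iest} as the sum of a geometric series $\sum_{k\geq 0}e^{-2Ck\alpha}$ (with $\alpha = 2\lambda R$), and to produce such a series by slicing $\Sigma_R$ into horizontal strips of uniform $u$-thickness $\alpha$, controlling each strip via the preceding volume lemma.

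Concretely, I would set $L := \inf_{B_R}u$, which is finite by compactness of $\overline{B_R}$ and continuity of $u$, pick a slice thickness $\alpha>0$ to be chosen, and observe that, since $u\geq L$ on $B_R$, up to a $\di v_g$-null set one has the disjoint decomposition
\[
	\Sigma_R \;=\; \bigsqcup_{k=0}^{\infty}\Sigma_{R,\,L+k\alpha,\,L+(k+1)\alpha}.
\]
The null set is the countable family of level sets $\{u=L+k\alpha\}$, which has measure zero as soon as $u$ is non-constant by Sard's theorem; the constant case is immediate. On the $k$-th strip the pointwise bound $e^{-2Cu}\leq e^{-2CL}e^{-2Ck\alpha}$ holds, while \eqref{Vest1} applied with $b-a=\alpha$ and $R_1 = R+\alpha/2$ controls the $\di v_g$-measure of the strip by $\vol(B_{R+\alpha/2})$. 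Summing the resulting geometric series yields
\[
	\int_{\Sigma_R}e^{-2Cu}\,\di v_g \;\leq\; \frac{e^{-2CL}\vol(B_{R+\alpha/2})}{1-e^{-2C\alpha}},
\]
and the choice $\alpha = 2\lambda R$ makes $R+\alpha/2 = (1+\lambda)R$ and $2C\alpha = 4\lambda CR$, producing exactly \eqref{Iest}. Estimate \eqref{Iest'} follows from the identical slicing argument, replacing \eqref{Vest1} with \eqref{Vest2}, and the ``for a.e.\ $R$'' qualifier is inherited verbatim.

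I do not anticipate a serious obstacle here: the preceding lemma already bounds $\vol_g(\Sigma_{R,a,b})$ by an expression depending on $(a,b)$ only through the thickness $b-a$, which is precisely the structural input that collapses the slicing into a pure geometric series. The only nontrivial choice is matching the two exponential scales involved—the per-strip weight $e^{-2C\alpha}$ and the radius inflation $R\mapsto R+\alpha/2$—to the two factors appearing in the statement, and the identification $\alpha = 2\lambda R$ accomplishes exactly that.
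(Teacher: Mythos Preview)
Your approach is essentially identical to the paper's: shift by $L=\inf_{B_R}u$, slice $\Sigma_R$ into $u$-strips of width $2\lambda R$, bound each strip's $\di v_g$-measure via \eqref{Vest1} (resp.\ \eqref{Vest2}), and sum the geometric series. One minor slip: Sard's theorem concerns the measure of critical \emph{values} in $\R$, not the $\di v_g$-measure of individual level sets in $M$, so it does not justify that $\{u=L+k\alpha\}$ is null; the paper avoids the issue entirely by partitioning with half-open strips $\{2n\lambda R\le u-L<2(n+1)\lambda R\}$ and then invoking \eqref{Vest1} for $\Sigma_{R,a,b}$ with $a$ slightly below the left endpoint, letting $a\nearrow$.
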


\begin{proof}
	Noting that
	\begin{equation} \label{Iest0}
			\int_{\Sigma_R} e^{-2Cu} \, \di v_g = \exp\left(-2C\inf_{B_R}u\right) \int_{\Sigma_R} e^{-2C(u-\inf_{B_R} u)} \, \di v_g
	\end{equation}
	we can set $v = u-\inf_{B_R} u$ and then we are left with the problem of estimating $\int_{\Sigma_R} e^{-2Cv} \, \di v_g$. Since $v\geq 0$ on $B_R$, we have
	\[
		\Sigma_R = \bigcup_{n=0}^\infty \tilde\Sigma_{R,n}
	\]
	where, for each integer $n\geq 0$,
	\[
		\tilde\Sigma_{R,n} := \{ (x,u(x)) : x \in B_R , \, 2n\lambda R \leq v(x) < 2(n+1)\lambda R \} \, .
	\]
	Then
	\begin{equation} \label{intS}
		\int_{\Sigma_R} e^{-2Cv} \, \di v_g = \sum_{n=0}^\infty \int_{\tilde\Sigma_{R,n}} e^{-2Cv} \, \di v_g \leq \sum_{n=0}^\infty e^{-4n\lambda CR} \, \vol_g(\tilde\Sigma_{R,n}) \, .
	\end{equation}
	Note that for any $n\geq0$ we have
	\[
		\tilde{\Sigma}_{R,n} \subseteq \Sigma_{R,a,b}
	\]
	for all $a<2n\lambda R-\inf_{B_R}u$ and for $b=2(n+1)\lambda R-\inf_{B_R}u$. Using \eqref{Vest1} to bound $\vol_g(\Sigma_{R,a,b})$ from above and then letting $a\nearrow 2n\lambda R-\inf_{B_R}u$ we obtain
	\[
		\vol_g(\tilde\Sigma_{R,n}) \leq \vol(B_{(1+\lambda)R}) \qquad \forall \, n \geq 0 \, .
	\]
	Substituting this estimate into the RHS of \eqref{intS} we get
	\[
		\int_{\Sigma_R} e^{-2Cv} \, \di v_g \leq \frac{\vol(B_{(1+\lambda)R})}{1-e^{-4\lambda CR}}
	\]
	and then \eqref{Iest} follows from this together with \eqref{Iest0}. A similar reasoning using \eqref{Vest2} instead of \eqref{Vest1} yields \eqref{Iest'}.
\end{proof}

\begin{lemma} \label{lem_cac}
	Let $M$ be a complete Riemannian manifold with $\Ricc\geq 0$ and $u\in C^\infty(M)$ a solution to  \eqref{MSE}. For any $C>0$, $s_1>0$, $\alpha\geq 2$ and $\psi\in \lip_c(M)$, $\psi\geq 0$ we have
	\begin{equation} \label{int0}
		\frac{4C^2}{\alpha^2} \int_{\{z>s_1\}} \|\nabla u\|^2 \psi^\alpha e^{-2Cu} \, \di v_g \leq \int_{\{z>s_1\}} \psi^{\alpha-2}\|\nabla\psi\|^2 e^{-2Cu} \, \di v_g
	\end{equation}
	where $z = W e^{-Cu}$.
\end{lemma}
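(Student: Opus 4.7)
The plan is to run a weighted Caccioppoli-type argument by testing the Jacobi inequality \eqref{divWz} against a carefully chosen nonnegative, compactly supported function. Concretely, for a nondecreasing Lipschitz approximation $\chi:\R\to[0,1]$ of the indicator of $(s_1,\infty)$ (say $\chi_\eps(t)=\min\{1,(t-s_1)_+/\eps\}$), I would use as test function $\phi=z\,\chi(z)\,\psi^\alpha$, which is nonnegative and belongs to $\lip_c(M)$ because $\psi\in\lip_c(M)$ and $z$ is smooth. Multiplying \eqref{divWz} by $\phi$ and invoking the identity $z^2W^{-2}=e^{-2Cu}$ makes the right-hand side become $C^2\int\chi(z)\psi^\alpha\|\nabla u\|^2 e^{-2Cu}\,\di v_g$, which is (up to the constant) the integral appearing on the left of the target inequality.

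Integrating by parts and using $\nabla\phi=[\chi(z)+z\chi'(z)]\psi^\alpha\nabla z+\alpha z\chi(z)\psi^{\alpha-1}\nabla\psi$, one gets a $\|\nabla z\|^2$ contribution with coefficient $\chi(z)+z\chi'(z)$ together with a mixed gradient term, which I would control by Cauchy--Schwarz and Young's inequality. With parameter $\mu>0$, Young's inequality yields the pointwise bound
\[
\alpha z\,\chi(z)\,\psi^{\alpha-1}\|\nabla z\|\,\|\nabla\psi\|\;\leq\;\tfrac{1}{2\mu}\,\chi(z)\,\psi^\alpha\,\|\nabla z\|^2\,+\,\tfrac{\mu\alpha^2}{2}\,z^2\,\chi(z)\,\psi^{\alpha-2}\,\|\nabla\psi\|^2.
\]
The sharp choice $\mu=1/2$ makes the coefficient of $\|\nabla z\|^2$ exactly equal to $1$, which is precisely what is needed to cancel the $\chi(z)\psi^\alpha\|\nabla z\|^2$ contribution produced by $\nabla\phi$; simultaneously it gives the coefficient $\alpha^2/4$ on the $\|\nabla\psi\|^2$ term, which is the source of the factor $4C^2/\alpha^2$ in the statement.

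After this absorption only a nonnegative leftover $z\,\chi'(z)\,\psi^\alpha\,\|\nabla z\|^2$ survives on the $\|\nabla z\|^2$ side, and it can be simply discarded; using once more $W^{-2}z^2=e^{-2Cu}$, the surviving inequality is
\[
\frac{4C^2}{\alpha^2}\int\chi(z)\,\psi^\alpha\,\|\nabla u\|^2\, e^{-2Cu}\,\di v_g\;\leq\;\int\chi(z)\,\psi^{\alpha-2}\,\|\nabla\psi\|^2\, e^{-2Cu}\,\di v_g.
\]
Letting $\eps\to 0^+$ so that $\chi_\eps(z)\nearrow\mathbf{1}_{\{z>s_1\}}$, the monotone convergence theorem applied to both sides delivers the claim. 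The hypothesis $\alpha\geq 2$ enters only to guarantee that $\psi^{\alpha-2}\|\nabla\psi\|^2$ is locally bounded. I expect the only delicate point to be the sharp choice $\mu=1/2$ in Young's inequality: a larger value would fail to absorb the $\|\nabla z\|^2$ contribution, while a smaller one would spoil the constant in the final estimate. Apart from this, the argument is routine Caccioppoli bookkeeping.
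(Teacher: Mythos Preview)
Your proposal is correct and follows essentially the same route as the paper: test \eqref{divWz} with $\phi=z\,\chi(z)\,\psi^\alpha$, use $\chi'\ge 0$ to discard the extra $\|\nabla z\|^2$ term, apply Young's inequality to the mixed term, exploit $W^{-2}z^2=e^{-2Cu}$, and pass to the limit in the cutoff. The only differences are cosmetic---the paper uses a smooth $\lambda_\delta$ in place of your Lipschitz $\chi_\eps$, and it writes the Young step as a direct completion of the square rather than introducing a parameter $\mu$ and optimizing.
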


\begin{proof}
	Let $C>0$, $s_1>0$ and $\alpha\geq 2$ be given. By standard density arguments, it is enough to prove \eqref{int0} for smooth $\psi$, so let $0\leq\psi\in C^\infty_c(M)$ be given. We already observed in \eqref{divWz} that, since $M$ has non-negative Ricci curvature, the function $z$ satisfies
	\[
		\div_g(W^{-2}\nabla z) \geq C^2 \|\nabla u\|^2 W^{-2} z \, ,
	\]
	hence
	\begin{equation} \label{intb}
		C^2 \int_M \|\nabla u\|^2 W^{-2} z \varphi \, \di v_g \leq - \int_M W^{-2} \langle \nabla z,\nabla\varphi \rangle \, \di v_g
	\end{equation}
	for every $\varphi \in \lip_c(M)$, $\varphi\geq 0$. Let $\lambda\in C^\infty(\R)$ be such that
	\[
		\lambda(s) = 0 \quad \text{for } \, s \leq 1 \, , \qquad \lambda(s) = 1 \quad \text{for } \, s \geq 2 \, , \qquad \lambda'\geq 0 \qquad \text{on } \, \R
	\]
	and for each $\delta>0$ define $\lambda_\delta\in C^\infty(\R)$ by
	\[
		\lambda_\delta(s) := \lambda(s/\delta) \qquad \forall \, s \in \R \, .
	\]
	We have $\lambda_\delta\nearrow\mathbf{1}_{(0,\infty )}$ as $\delta\to0^+$. For each $\delta>0$ set
	\[
		\varphi_\delta := \psi^\alpha z \lambda_\delta(z-s_1) \, .
	\]
	We have $\varphi_\delta\in\lip_c(\Sigma)$, $\varphi_\delta\geq 0$, hence $\varphi_\delta$ is an admissible test function and by \eqref{intb}
	\begin{equation} \label{intb1}
		C^2 \int_{\{z>s_1\}} \|\nabla u\|^2 W^{-2} z^2 \psi^\alpha \lambda_\delta(z-s_1) \, \di v_g \leq - \int_{\{z>s_1\}} W^{-2} \langle \nabla z,\nabla\varphi_\delta \rangle \, \di v_g
	\end{equation}
	where we can restrict ourselves to integrating over $\{z>s_1\}$ since the support of $\varphi_\delta$ is contained in the closure of $\{\lambda_\delta(z-s_1) > 0\} \equiv \{z-s_1>\delta\}$, hence inside $\{z\geq s_1+\delta\}\subseteq\{z>s_1\}$. By direct computation and using $\lambda_\delta'\geq 0$ we get
	\begin{align*}
		\langle \nabla z,\nabla\varphi_\delta \rangle & = \alpha\psi^{\alpha-1}\lambda_\delta(z-s_1) z\langle\nabla z,\nabla\psi\rangle \\
		& \phantom{=\;} + \psi^\alpha \lambda_\delta(z-s_1)\|\nabla z\|^2 \\
		& \phantom{=\;} + \psi^\alpha \lambda_\delta'(z-s_1)z\|\nabla z\|^2 \\
		& \geq \lambda_\delta(z-s_1)\left(\alpha\psi^{\alpha-1} z\langle\nabla z,\nabla\psi\rangle + \psi^\alpha\|\nabla z\|^2\right)
	\end{align*}
	and by Young's inequality we have
	\[
		\alpha\psi^{\alpha-1} z\langle\nabla z,\nabla\psi\rangle + \psi^\alpha\|\nabla z\|^2 \geq -\frac{\alpha^2}{4} \psi^{\alpha-2} z^2 \|\nabla\psi\|^2
	\]
	hence, since $\lambda_\delta\geq 0$,
	\[
		- \langle \nabla z,\nabla\varphi_\delta \rangle \leq \frac{\alpha^2}{4} z^2 \psi^{\alpha-2} \|\nabla\psi\|^2 \lambda_\delta(z-s_1) \, .
	\]
	Substituting this into \eqref{intb1} we obtain
	\[
		C^2 \int_{\{z>s_1\}} \|\nabla u\|^2 W^{-2} z^2 \psi^\alpha \lambda_\delta(z-s_1) \, \di v_g \leq \frac{\alpha^2}{4} \int_{\{z>s_1\}} W^{-2} z^2 \psi^{\alpha-2} \|\nabla\psi\|^2 \lambda_\delta(z-s_1) \, \di v_g \, .
	\]
	By construction we have $W^{-2} z^2 = e^{-2Cu}$, hence this amounts to
	\[
		C^2 \int_{\{z>s_1\}} \|\nabla u\|^2 \psi^\alpha e^{-2Cu} \lambda_\delta(z-s_1) \, \di v_g \leq \frac{\alpha^2}{4} \int_{\{z>s_1\}} \psi^{\alpha-2} e^{-2Cu} \|\nabla\psi\|^2 \lambda_\delta(z-s_1) \, \di v_g
	\]
	and since $\lambda_\delta(z-s_1) \nearrow \mathbf{1}_{\{z>s_1\}}$, by monotone convergence when letting $\delta\searrow 0$ we obtain
	\[
		C^2 \int_{\{z>s_1\}} \|\nabla u\|^2 \psi^\alpha e^{-2Cu} \, \di v_g \leq \frac{\alpha^2}{4} \int_{\{z>s_1\}} \psi^{\alpha-2} e^{-2Cu} \|\nabla\psi\|^2 \, \di v_g
	\]
	that is, \eqref{int0}.
\end{proof}

\section{Proof of the main theorem}

Let $(M,\sigma)$ be a connected, complete Riemannian manifold with $\Ricc\geq 0$. As in the previous section, let $o\in M$ be a fixed origin and set $r(x)=\dist_\sigma(o,x)$ for each $x\in M$. Let $u\in C^\infty(M)$ be a solution of the minimal hypersurface equation \eqref{MSE} satisfying
\begin{equation} \label{u-bound}
	u \geq - f(r) \qquad \text{on } \, M
\end{equation}
for some continuous, positive and non-decreasing function $f : \R^+_0 \to \R^+$, where we adopt the notation
\[
	\R^+ = (0,\infty ) \, , \qquad \R^+_0 = [0,\infty ) \, .
\]
We assume that
\begin{equation} \label{f_cond}
	\lim_{t\to\infty } f(t) = \infty \, , \qquad \exists \, \lim_{t\to\infty } \frac{f(t)}{t} =: K \in [0,\infty ) \, .
\end{equation}
We claim that under these assumptions it is possible to choose $h : \R^+ \to \R^+$ continuous and positive in such a way that the following requirements are satisfied for some parameters $A\in\R^+_0$ and $t_0\in\R^+_0$:
\begin{equation} \label{hft0}
	\left\{
	\begin{array}{r@{\;}c@{\;}ll}
		th(t) & > & 2 & \qquad \text{for each } \, t > t_0 \\[0.2cm]
		h(t) & \to & 0 & \qquad \text{as } \, t \to \infty  \\[0.2cm]
		f(t)h(t) & \to & A & \qquad \text{as } \, t \to \infty  \, .
	\end{array}
	\right.
\end{equation}
Indeed, if $f(t) = o(t)$ (that is, $K=0$) then these conditions are satisfied with $A=0$ by
\[
	h(t) = \frac{1}{\sqrt{tf(t)}} + \frac{2}{t}
\]
(in particular, in this case $th(t) = 2+ \sqrt{t/f(t)} \to \infty $ as $t\to\infty $); otherwise, that is $K>0$, the conditions in \eqref{hft0} are satisfied for instance by
\[
	h(t) = \frac{2}{t} + \frac{1}{t^2}
\]
with $A=2K$. For any $h$ satisfying the requirements in \eqref{hft0}, we set
\begin{equation} \label{ell_def}
	\ell = \liminf_{t\to\infty } th(t) \in [2,\infty ] \, .
\end{equation}

Let $M$, $u$ and $f$ be as above and let $h$ satisfy \eqref{hft0} for some $t_0,A\in\R^+_0$. Fix two parameters
\[
	s_0 > 1 \, , \qquad \mu > 0 \, ,
\]
set
\begin{equation} \label{Lambda}
	\Lambda = \frac{e^\mu}{\sqrt{1-s_0^{-2}}}
\end{equation}
and for each $R>0$ define
\begin{equation} \label{CzR}
	C_R = \Lambda h(R) \, , \qquad z_R = W e^{-C_R u} \, , \qquad s_1(R) = s_0 \exp(\Lambda f(R)h(R))
\end{equation}
and
\begin{equation} \label{ER_def}
	E_R = \{ x \in B_R : z_R(x) > s_1(R) \} \, .
\end{equation}
where $B_R = B_R^\sigma(o)$ as defined in the previous section. We claim that the following inclusion holds:
\begin{equation} \label{inc1g}
	E_R \subseteq \{ x \in B_R : W(x) > s_0\} \qquad \forall \, R>0 \, .
\end{equation}
Indeed, for any $R>0$ and for any $x\in B_R$ such that $W(x)\leq s_0$ we have
\begin{align*}
	z_R(x) & = W(x)\exp(-\Lambda u(x)h(R)) \\
	& \leq s_0\exp(-\Lambda u(x)h(R)) \\
	& \leq s_0\exp(\Lambda f(r(x))h(R)) \\
	& \leq s_0\exp(\Lambda f(R)h(R)) \\
	& = s_1(R) \, ,
\end{align*}
where we used positivity of $s_0$, $\Lambda$, $h$, $f$, monotonicity of $f$ and assumption \eqref{u-bound}. This shows that
\[
\{x\in B_R : W(x) \leq s_0\} \subseteq \{ x \in B_R : z_R(x) \leq s_1(R) \} \qquad \forall R > 0
\]
and passing to the complements in $B_R$ we get \eqref{inc1g}.

\begin{lemma} \label{lem_base}
	Let $M$ be a connected, complete Riemannian manifold with $\Ricc\geq 0$ and $u\in C^\infty(M)$ a solution to \eqref{MSE} satisfying \eqref{u-bound} for some non-decreasing $f:\R_0^+ \to \R^+$. Let $h$, $s_0$, $\mu$, $\Lambda$, $C_R$, $z_R$ and $s_1(R)$ be as above. Then
	\begin{equation} \label{g1}
		\int_{E_R\cap B_{R/2}} e^{-2C_R u} \, \di v_g \leq e^{-\mu R h(R)} \vol(B_{2R}) \left(\frac{e^{2\Lambda A}}{1-e^{-4\Lambda\ell}} + o(1)\right) \qquad \text{as } \, R \to \infty 
	\end{equation}
	(with the agreement that $e^{-\infty}=0$ in case $\ell=\infty $). Moreover, if
	\begin{equation} \label{g1b}
		\vol(B_{2R}) = o(e^{\mu Rh(R)}) \qquad \text{as } \, R \to \infty 
	\end{equation}
	then
	\begin{equation} \label{g1c}
		\sup_M W \leq s_0 e^{A\Lambda} < \infty  \, .
	\end{equation}
\end{lemma}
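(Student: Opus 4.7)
The plan is to derive \eqref{g1} by combining an iterative application of Lemma \ref{lem_cac} with Lemma \ref{lem_eCu}, and then to deduce \eqref{g1c} from \eqref{g1} by a standard contradiction argument based on continuity. The key observation is that on $E_R$ one has $W > s_0$ by \eqref{inc1g}, so $\|\nabla u\|^2 = 1 - W^{-2} > 1 - s_0^{-2}$, and the choice $\Lambda = e^\mu/\sqrt{1 - s_0^{-2}}$ makes the ellipticity constant $C_R^2(1 - s_0^{-2})$ appearing in Lemma \ref{lem_cac} equal to $e^{2\mu} h(R)^2$.

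For the iteration, fix an integer $N = N(R)$ and, for $k = 0, \ldots, N$, set $R_k = R/2 + kR/(2N)$ and pick Lipschitz cutoffs $\psi_k$ with $\psi_k \equiv 1$ on $B_{R_k}$, $\psi_k \equiv 0$ off $B_{R_{k+1}}$, and $|D\psi_k| \leq 2N/R$. Applying Lemma \ref{lem_cac} with $\alpha = 2$, $C = C_R$, $s_1 = s_1(R)$, $\psi = \psi_k$, and exploiting the lower bound $\|\nabla u\|^2 > 1 - s_0^{-2}$ on $E_R$, one obtains
\[
I_k := \int_{E_R \cap B_{R_k}} e^{-2 C_R u} \, \di v_g \leq \frac{1}{1+\theta} \, I_{k+1}, \qquad \theta := \frac{e^{2\mu}(R h(R))^2}{4 N^2},
\]
and hence $I_0 \leq (1 + \theta)^{-N} I_N$. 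Selecting $N$ so that $N \sim R h(R)$ as $R \to \infty$ (admissible since $R h(R) > 2$ for $R > t_0$) makes $\theta$ comparable to $e^{2\mu}/4$, and the elementary inequality $1 + e^{2\mu}/4 \geq e^\mu$ (equivalent to $(e^\mu - 2)^2 \geq 0$) then yields $(1 + \theta)^{-N} \leq e^{-\mu R h(R)}(1 + o(1))$ as $R \to \infty$.

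To bound $I_N \leq \int_{\Sigma_R} e^{-2 C_R u} \, \di v_g$, apply Lemma \ref{lem_eCu} with $\lambda = 1$:
\[
I_N \leq e^{2 C_R f(R)} \frac{\vol(B_{2R})}{1 - e^{-4 C_R R}},
\]
where we used $\inf_{B_R} u \geq -f(R)$ from \eqref{u-bound}. Since $C_R = \Lambda h(R)$, $f(R) h(R) \to A$, and $\liminf_{R \to \infty} R h(R) = \ell$, the right-hand side is at most $\vol(B_{2R}) \bigl( e^{2\Lambda A}/(1 - e^{-4\Lambda \ell}) + o(1) \bigr)$. Combining this with the iterative estimate yields \eqref{g1}.

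For \eqref{g1c}, assume \eqref{g1b}, so the right-hand side of \eqref{g1} tends to zero as $R \to \infty$. Suppose by contradiction that $W(x_0) > s_0 e^{\Lambda A}$ for some $x_0 \in M$. Since $C_R \to 0$, one has $z_R(x_0) \to W(x_0)$ and $s_1(R) \to s_0 e^{\Lambda A}$, so $x_0 \in E_R \cap B_{R/2}$ for every sufficiently large $R$. By continuity there is a bounded open neighborhood $U$ of $x_0$ on which $z_R > s_1(R)$ and $e^{-2 C_R u} \geq c > 0$ uniformly in large $R$, whence $\int_{E_R \cap B_{R/2}} e^{-2 C_R u} \, \di v_g \geq c \, \vol_g(U) > 0$ for all such $R$, contradicting the convergence of the left-hand side to zero. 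The main obstacle is the iteration in the second paragraph: the choice of $N$ (comparable to $R h(R)$) together with the use of the elementary inequality above and the precise value of $\Lambda$ must be performed carefully to produce exactly the exponential factor $e^{-\mu R h(R)}$ rather than a weaker rate.
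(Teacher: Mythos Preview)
Your argument is correct in spirit but takes a genuinely different route from the paper. The paper applies Lemma~\ref{lem_cac} \emph{once} with a single cutoff $\psi_R$ (equal to $1$ on $B_{R/2}$, vanishing outside $B_R$) and a \emph{real} exponent $\alpha>2$, then uses H\"older's inequality with exponents $\tfrac{\alpha}{\alpha-2},\tfrac{\alpha}{2}$ to obtain
\[
\Big(\tfrac{e^\mu Rh(R)}{\alpha}\Big)^{\alpha}\int_{E_R\cap B_{R/2}} e^{-2C_R u}\,\di v_g \le \int_{E_R} e^{-2C_R u}\,\di v_g,
\]
and the choice $\alpha=Rh(R)$ produces the factor $e^{\mu Rh(R)}$ exactly. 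Your De Giorgi--type iteration with $\alpha=2$ and $N$ nested annuli reaches the same endpoint and is a legitimate alternative; it trades H\"older for an elementary filling-hole inequality $\theta I_k\le I_{k+1}-I_k$. What the paper's approach buys is that $\alpha$ need not be an integer, so the exponential rate comes out cleanly with no rounding error. In your scheme $N$ must be an integer, and the inequality $1+e^{2\mu}/4\ge e^\mu$ you invoke is precisely the case $N=Rh(R)$; when $N=\lfloor Rh(R)\rfloor$ you actually need the stronger fact that $1+\tfrac{e^{2\mu}x^2}{4}\ge e^{\mu x}$ for all $x\in[1,\tfrac32)$ (which is true, but requires a short separate argument), and the phrase ``$N\sim Rh(R)$'' is problematic when $\ell<\infty$ since then $Rh(R)$ may stay bounded and non-integer. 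This is the only imprecision; once it is addressed, your proof of \eqref{g1} is complete. Your derivation of \eqref{g1c} coincides with the paper's.
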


\begin{proof}[Proof of Lemma \ref{lem_base}]
	We start by proving \eqref{g1}. For each $R>0$ we apply Lemma \eqref{lem_cac} with parameters $C=C_R$, $s_1=s_1(R)$ and with $\psi=\psi_R$ the cut-off function given by
	\[
	\psi_R = \begin{cases}
		1 & \quad \text{on } \, B_{R/2} \\
		\dfrac{2R-2r}{R} & \quad \text{on } \, B_R\setminus B_{R/2} \\
		0 & \quad \text{on } \, M \setminus B_R
	\end{cases}
	\]
	to obtain
	\begin{equation} \label{in0}
		\frac{4C_R^2}{\alpha^2} \int_{E_R} \|\nabla u\|^2 \psi_R^\alpha e^{-2C_R u} \, \di v_g \leq \int_{E_R} \psi_R^{\alpha-2} \|\nabla\psi_R\|^2 e^{-2C_R u} \, \di v_g \qquad \forall \, \alpha \geq 2 \, ,
	\end{equation}
	where we restricted the integration domain to $E_R$ since $\psi_R$ and $\|\nabla\psi_R\|$ vanish a.e.~outside $B_R$. By \eqref{inc1g} we have
	\[
		W > s_0 \qquad \text{on } \, E_R \, .
	\]
	Since $\|\nabla u\|^2 = 1 - W^{-2}$, this yields
	\[
		\|\nabla u\|^2 > 1 - s_0^{-2} \qquad \text{on } \, E_R
	\]
	and then by \eqref{CzR} and \eqref{Lambda}
	\begin{equation} \label{CRDu}
		C_R^2 \|\nabla u\|^2 = \Lambda^2\|\nabla u\|^2 h(R)^2 > \Lambda^2(1-s_0^{-2}) h(R)^2 = e^{2\mu} h(R)^2 \qquad \text{on } \, E_R \, .
	\end{equation}
	On the other hand, we have
	\begin{equation} \label{DpsiR}
		\|\nabla\psi_R\|^2 \leq |D\psi_R|^2 \leq \frac{4}{R^2} \qquad \text{on } \, M
	\end{equation}
	Substituting \eqref{CRDu} and \eqref{DpsiR} into \eqref{in0} we obtain
	\begin{equation} \label{in1}
		\frac{e^{2\mu} R^2 h(R)^2}{\alpha^2} \int_{E_R} \psi_R^\alpha e^{-2C_R u} \, \di v_g \leq \int_{E_R} \psi_R^{\alpha-2} e^{-2C_R u} \, \di v_g \qquad \forall \, \alpha \geq 2 \, .
	\end{equation}
	For any $\alpha>2$ an application of H\"older's inequality with conjugate exponents $\frac{\alpha}{\alpha-2}$ and $\frac{\alpha}{2}$ yields
	\[
		\int_{E_R} \psi_R^{\alpha-2} e^{-2C_R u} \, \di v_g \leq \left( \int_{E_R} \psi_R^\alpha e^{-2C_R u} \, \di v_g \right)^{1-\frac{2}{\alpha}} \left( \int_{E_R} e^{-2C_R u} \, \di v_g \right)^{\frac{2}{\alpha}} \, .
	\]
	Substituting this into \eqref{in1} and then raising everything to the exponent $\frac{\alpha}{2}$ we get
	\[
		\left( \frac{e^\mu R h(R)}{\alpha} \right)^{\alpha} \int_{E_R} \psi_R^\alpha e^{-2C_R u} \, \di v_g \leq \int_{E_R} e^{-2C_R u} \, \di v_g \qquad \forall \, \alpha > 2 \, .
	\]
	Recalling that $\psi_R\equiv 1$ on $B_{R/2}$, we further obtain
	\begin{equation} \label{in2}
		\left( \frac{e^\mu R h(R)}{\alpha} \right)^{\alpha} \int_{E_R\cap B_{R/2}} e^{-2C_R u} \, \di v_g \leq \int_{E_R} e^{-2C_R u} \, \di v_g \qquad \forall \, \alpha > 2 \, .
	\end{equation}
	By the first assumption in \eqref{hft0}, for each $R>t_0$ we apply \eqref{in2} with the choice
	\[
		\alpha = \alpha(R) := R h(R) > 2
	\]
	to obtain
	\begin{equation} \label{in3}
		e^{\mu Rh(R)} \int_{E_R\cap B_{R/2}} e^{-2C_R u} \, \di v_g \leq \int_{E_R} e^{-2C_R u} \, \di v_g \qquad \forall \, R > t_0 \, .
	\end{equation}
	By Lemma \ref{lem_eCu} applied with the choice $C=C_R\equiv \Lambda h(R)$ and $\lambda=1$, and by also recalling \eqref{u-bound}, we estimate
	\begin{align*}
		\int_{E_R} e^{-2C_R u} \, \di v_g \leq \int_{\Sigma_R} e^{-2C_R u} \, \di v_g & \leq \frac{\vol(B_{2R})}{1-e^{-4C_R R}} \exp\left(-2C_R \inf_{B_R} u\right) \\
		& \leq \frac{\vol(B_{2R})}{1-e^{-4\Lambda Rh(R)}} \exp\left(2\Lambda f(R)h(R)\right)
	\end{align*}
	and thus we finally obtain
	\begin{equation} \label{in4}
		\int_{E_R\cap B_{R/2}} e^{-2C_R u} \, \di v_g \leq \frac{e^{-\mu Rh(R)}}{1-e^{-4\Lambda Rh(R)}} \vol(B_{2R}) \exp\left(2\Lambda f(R)h(R)\right) \qquad \forall \, R > t_0 \, .
	\end{equation}
	From the third condition in \eqref{hft0} and the definition of $\ell := \liminf_{t\to\infty } th(t)$ we have
	\[
		\limsup_{R\to\infty } \frac{\exp\left(2\Lambda f(R)h(R)\right)}{1-e^{-4\Lambda Rh(R)}} = \frac{e^{2\Lambda A}}{1-e^{-4\Lambda\ell}} \, ,
	\]
	that is,
	\[
		\frac{\exp\left(2\Lambda f(R)h(R)\right)}{1-e^{-4\Lambda Rh(R)}} \leq \frac{e^{2\Lambda A}}{1-e^{-4\Lambda\ell}} + o(1) \qquad \text{as } \, R \to \infty 
	\]
	and substituting this into \eqref{in4} we obtain \eqref{g1}.
	
	We now prove the last part of the statement. Suppose by contradiction that \eqref{g1b} holds but \eqref{g1c} is not satisfied. Then there exists $x_0\in M$ such that $W(x_0) > s_0 e^{A\Lambda}$ and by continuity we can find $\gamma>s_0 e^{A\Lambda}$ and a relatively compact open neighbourhood $\Omega$ of $x_0$ such that
	\[
		W > \gamma \qquad \text{on } \, \Omega \, .
	\]
	Since $u$ is bounded on $\Omega$ and $C_R = \Lambda h(R)\to 0$ as $R\to\infty $ by \eqref{hft0}, we have
	\[
		e^{-C_R u} \to 1 \qquad \text{and therefore} \qquad z_R \to W \qquad \text{uniformly on $\Omega$ as } \, R \to \infty  \, .
	\]
	On the other hand, we also have
	\[
		s_1(R) \to s_0 e^{A\Lambda} < \gamma \qquad \text{as } \, R \to \infty  \, .
	\]
	Hence, setting $\eps = \frac{1}{2}(\gamma-s_0 e^{A\Lambda})$, there exists $R_1>0$ such that
	\[
		z_R > W - \eps > \gamma - \eps = s_0 e^{A\Lambda} + \eps > s_1(R) \qquad \text{on } \, \Omega \qquad \forall \, R > R_1 \, ,
	\]
	that is, $\Omega \subseteq E_R$ for all $R>R_1$. Since $\Omega$ is bounded, up to choosing a larger $R_1$ we can also assume that $\Omega \subseteq E_R \cap B_{R/2}$ for all $R>R_1$. But then from \eqref{g1} and \eqref{g1b} we have
	\[
		0 = \lim_{R\to\infty } \int_{E_R\cap B_{R/2}} e^{-2C_R u} \, \di v_g \geq \lim_{R\to\infty } \int_\Omega e^{-2C_R u} \, \di v_g = \vol_g(\Omega) > 0 \, ,
	\]
	contradiction.
\end{proof}

We next need the following well-known Lemma. We provide a detailed proof for the sake of completeness.
 
\begin{lemma} \label{lem_liou}
	Let $M$ be a complete Riemannian manifold with $\Ricc\geq 0$, and $u\in C^\infty(M)$ a solution to \eqref{MSE} such that
	\[
		(i) \; \; \sup_M |Du| < \infty  \qquad \text{and} \qquad (ii) \; \; u_- (x) = o(r(x)) \quad \text{as } \, r(x) \to \infty \, .
	\]
	Then $u$ is constant.
\end{lemma}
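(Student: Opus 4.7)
The plan is a proof by contradiction based on the tangent cone structure result \cite[Theorem 8]{cgmr} already recalled in the introduction. If $u$ were non-constant, so that $L:=\|Du\|_\infty>0$, then the conclusion of \cite[Theorem 8]{cgmr} would force $u$ to grow \emph{exactly} linearly on both sides at rate $L$, contradicting hypothesis $(ii)$. All that remains is to verify that our assumptions fit the framework of \cite[Theorem 8]{cgmr}: hypothesis $(i)$ provides the global Lipschitz bound, while $(ii)$ trivially implies $u_-=O(r)$, the linear one-sided growth assumption required there.

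To spell out the blow-down step underlying the previous paragraph, I would proceed as follows. Along any divergent sequence $R_k\to\infty$, pass to a subsequence so that the rescaled pointed spaces $(M,R_k^{-2}\sigma,o)$ converge in pointed Gromov--Hausdorff sense to a tangent cone $(M_\infty,d_\infty,o_\infty)$; by \cite[Theorem 8]{cgmr} we may further arrange $M_\infty$ to split isometrically as $N_\infty\times\R$. The rescaled solutions $u_{R_k}(x):=u(x)/R_k$ solve \eqref{MSE} on $(M,R_k^{-2}\sigma)$ and are equi-Lipschitz with constant $L$ by $(i)$, so (along a further subsequence) they converge locally uniformly to a blow-down $u_\infty:M_\infty\to\R$, which \cite[Theorem 8]{cgmr} identifies as $u_\infty(y,t)=L\,t$. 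To exploit $(ii)$, pick any $x_\infty\in M_\infty$ with $r_\infty(x_\infty)=:\rho>0$ and a sequence $x_k\in M$ with $r_{R_k^{-2}\sigma}(x_k)\to\rho$ and $u_{R_k}(x_k)\to u_\infty(x_\infty)$. Then $r(x_k)=R_k\,r_{R_k^{-2}\sigma}(x_k)\to\infty$, so for any $\eps>0$ hypothesis $(ii)$ eventually gives $u_-(x_k)\leq\eps\,r(x_k)$, whence
\[
(u_{R_k})_-(x_k)=\frac{u_-(x_k)}{R_k}\leq\eps\,r_{R_k^{-2}\sigma}(x_k)\longrightarrow\eps\rho\quad\text{as } k\to\infty.
\]
Passing to the limit and letting $\eps\searrow 0$ yields $(u_\infty)_-(x_\infty)=0$ for every $x_\infty\neq o_\infty$; by continuity of $u_\infty$ this forces $u_\infty\geq 0$ on the whole of $M_\infty$, which together with the explicit form $u_\infty(y,t)=L\,t$ compels $L=0$ (since otherwise $u_\infty<0$ on the half-space $\{t<0\}$), contradicting $L>0$.

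The expected obstacle is essentially imported from \cite[Theorem 8]{cgmr}, whose machinery (splitting of tangent cones of Ricci-limit spaces together with the identification of blow-down limits of minimal graphs) is what powers the approach. At the level of Lemma \ref{lem_liou} itself, the only technical step is the clean passage of the $o(r)$-control to the blow-down limit, and this is routine thanks to the equi-Lipschitz bound on the rescalings $u_{R_k}$ furnished by $(i)$, which unlocks Arzel\`a--Ascoli and makes the limiting inequality automatic.
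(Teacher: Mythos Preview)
Your argument is correct and is exactly the blow-down route via \cite[Theorem 8]{cgmr} that the paper itself sketches in the opening sentence of its proof of Lemma \ref{lem_liou}. The paper, however, then writes out a different ``more direct'' argument in full: since $(i)$ makes $L:=W\Delta_g$ uniformly elliptic on $(M,\sigma)$, one first applies Saloff-Coste's Harnack inequality to the shifted solutions $u-\inf_{B_{2R}}u$ to upgrade $u_-=o(r)$ to the two-sided bound $|u|=o(r)$; then a Caccioppoli inequality for $Lu=0$ combined with Bishop--Gromov gives $\fint_{B_R}|Du|^2\to 0$, and finally a Li-type mean value formula for the bounded $L$-subsolution $|Du|^2$ (Proposition 22 in \cite{cgmr}) forces $\sup_M|Du|^2=\lim_R\fint_{B_R}|Du|^2=0$. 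Your approach outsources the work to the tangent-cone and blow-down machinery of \cite{cgmr}, which is conceptually clean but heavy; the paper's detailed proof stays entirely within elliptic PDE on $M$ and avoids Gromov--Hausdorff limits altogether, at the price of invoking two separate black boxes (the Harnack inequality of \cite{sc92} and the mean-value result of \cite{cgmr}).
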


\begin{proof}
	A way to see this is to use \cite[Theorem 8]{cgmr} and its proof, according to which, if $u$ is non-constant, any blowdown $u_\infty : M_\infty \to \R$ of $u$ corresponding to the tangent cone $M_\infty = N_\infty \times \R$ satisfies $u_\infty(y,t) = \|Du\|_\infty t$. In particular, $u$ has exactly linear growth both from above and from below. We provide a more direct argument. Without loss of generality we can assume that $u(o)=0$, so that
	\[
		\inf_{B_R} u \leq 0 \qquad \forall \, R > 0 \, .
	\]
	From assumption (i), the operator $L = W\Delta_g$ is a uniformly elliptic operator in divergence form on the complete manifold $(M,\sigma)$ of non-negative Ricci curvature. Since $L u = 0$, for each $R>0$ the function
	\[
		u_R = u - \inf_{B_{2R}} u
	\]
	satisfies $L u_R = 0$ on $M$, $u_R \geq 0$ on $B_{2R}$ and
	\[
		\inf_{B_R} u_R = \inf_{B_R} u - \inf_{B_{2R}} u \leq -\inf_{B_{2R}} u \, .
	\]
	Since $u_R$ is non-negative on $B_{2R}$, from the Harnack inequality of \cite[Theorem 5.3]{sc92} we have
	\[
		\sup_{B_R} u_R \leq C \inf_{B_R} u_R \leq - C \inf_{B_{2R}} u
	\]
	with $C>0$ constant independent of $R$, and thus
	\[
		\sup_{B_R} u = \sup_{B_R} u - \inf_{B_{2R}} u \leq - (1+C) \inf_{B_{2R}} u = o(R) \qquad \text{as } \, R \to \infty  \, .
	\]
	From this and (ii) we obtain that $|u(x)| = o(r(x))$ as $x\to\infty$, and since $|Du|\in L^\infty(M)$ we can apply the argument from the proof of \cite[Theorem 3.6]{djx16} and \cite[Theorem 11]{cgmr} to conclude that $u$ is constant. We briefly sketch the argument for reader's convenience: since $Lu=0$ and $L$ is uniformly elliptic, we have a Caccioppoli inequality
	\[
		\int_M \psi^2 |Du|^2 \, \di v \leq C \int_M u^2 |D\psi|^2 \, \di v \qquad \forall \, \psi \in \lip_c(M)
	\]
	with $C>0$ a fixed constant. Fix $\eps>0$. Since $|u| = o(r)$, there exists $R_0>0$ large enough so that $u^2 \leq \eps R^2$ on $B_{2R}$ for all $R>R_0$. Applying the Caccioppoli inequality with the cut-off function
	\[
		\psi_R = \left\{
			\begin{array}{ll}
				1 & \text{on } \, B_R \\
				\dfrac{2R-r}{R} & \text{on } \, B_{2R}\setminus B_R \\
				0 & \text{on } \, M \setminus B_{2R}
			\end{array}
		\right.
	\]
	we get
	\[
		\int_{B_R} |Du|^2 \, \di v \leq C\eps\,\vol(B_{2R}) \leq 2^m C \eps\,\vol(B_R) \qquad \forall \, R>R_0
	\]
	where we used Bishop-Gromov's inequality with $m=\dim M$. Letting $R\to\infty $ (with $\eps>0$ fixed) we get
	\begin{equation} \label{li1}
		\lim_{R\to\infty } \frac{1}{\vol(B_R)} \int_{B_R} |Du|^2 \, \di v \leq 2^m C \eps \, .
	\end{equation}
	The function $|Du|^2$ is bounded and satisfies
	\[
		L |Du|^2 = W\Delta_g W^2 \geq W^2 \Delta_g W \geq (\|\II\|^2 + \Ricc_{\bar\sigma}(\mathbf{n},\mathbf{n})) W^3 \geq 0
	\]
	due to Jacobi equation \eqref{Jac} and condition $\Ricc\geq 0$. Again since $L$ is a uniformly elliptic operator in divergence form on a manifold with $\Ricc\geq 0$, we apply Proposition 22 in \cite{cgmr} to the non-negative, bounded, $L$-superharmonic function $f=\sup_M|Du|^2 - |Du|^2$ to obtain the Li-type mean value formula
	\begin{equation} \label{li2}
		\sup_M |Du|^2 = \lim_{R\to\infty } \frac{1}{\vol(B_R)} \int_{B_R} |Du|^2 \, \di v \, .
	\end{equation}
	Combining \eqref{li1} and \eqref{li2} we get
	\[
		\sup_M |Du|^2 \leq 2^m C \eps \qquad \forall \, \eps > 0
	\]
	that is, $Du\equiv 0$ on $M$. This concludes the proof.
\end{proof}

We can now prove Theorem \ref{thm_main2} from the Introduction. 

\begin{proof}[Proof of Theorem \ref{thm_main2}] 
Fix $A>0$ such that 
	\[
		u \geq - \frac{Ar}{\log(1+r)} \qquad \text{on } \, M,
	\]
and consider the functions
	\[
		f(t) = \frac{At}{\log(1+t)} \, , \qquad h(t) = \frac{3+\log(1+t)}{t},
	\]
	which satisfy all assumptions in \eqref{hft0}. By the Bishop-Gromov comparison theorem, 
	\[
		\vol(B_{2R}) \leq C R^m \qquad \forall \, R > 0
	\]
	with $m=\dim M$ and $C=C(m)>0$ a constant independent of $R$. Therefore, for any $\mu>0$ we have
	\[
		e^{-\mu Rh(R)}\vol(B_{2R}) \leq (e^{-\mu})^{3+\log(1+R)} C R^m = \frac{C}{e^{3\mu}} \frac{R^m}{(R+1)^\mu} \qquad \forall \, R > 0 \, .
	\]
	Thus, for any choice of parameters $\mu>m$ and $s_0>1$ we deduce from Lemma \ref{lem_base} that
	\[
		\sup_M W \leq s_0 e^{A\Lambda} < \infty 
	\]
	with $\Lambda = \Lambda(\mu,s_0)$ defined as in \eqref{Lambda}. In particular, $|Du| = \sqrt{W^2-1}$ is bounded on $M$. By the previous Lemma \ref{lem_liou}, we conclude that $u$ is constant.
\end{proof}

\section{Further results}

Let $(M,\sigma)$ be a connected, complete Riemannian manifold with $\Ricc\geq 0$ and let $r$, $B_R$ denote, respectively, the Riemannian distance from a fixed origin $o\in M$ and the geodesic ball of radius $R>0$ centered at $o$. In this section, we derive asymptotic (as $R\to\infty $) upper estimates on the measure of the sets $\{|Du|>\eps\}\cap B_R$, $\eps>0$, in case $u$ is a solution to \eqref{MSE} on $M$ satisfying $u=o(r)$. In the last part of the section we show that condition $u=o(r)$ can in fact be relaxed to $u_- = o(r)$, Therefore, the estimates proved in this section can be seen as complementary, weaker results to those proved in the previous section, where the stronger assumption $u_- = O(r/\log r)$ was required.

Let $(M,\sigma)$ and $r$ be as above and let $u\in C^\infty(M)$ be a solution to \eqref{MSE} satisfying
\begin{equation} \label{|u|bound}
	|u| \leq f(r) \qquad \text{on } \, M
\end{equation}
for a continuous, positive and non-decreasing $f : \R^+_0 \to \R^+$. Let $h,\ell,s_0,\mu,\Lambda,C_R,z_R,s_1(R),E_R$ be as at the beginning of the previous section. For each $R>0$ define
\[
	s_2(R) = s_0\exp(2\Lambda f(R)h(R))
\]
and
\[
	\Omega_R = \{ x \in B_R : W(x) > s_2(R) \} \, .
\]
We observe that, due to the upper bound $u\leq f(r)$ ensured by \eqref{|u|bound}, the following inclusion holds:
\begin{equation} \label{OER}
	\Omega_R \subseteq E_R \qquad \forall \, R > 0 \, .
\end{equation}
Indeed, for any $R>0$ and $x\in B_R$ such that $W(x) > s_2(R)$ we have
\begin{align*}
	z_R(x) & = W(x) \exp(-\Lambda u(x)h(R)) \\
	& > s_2(R) \exp(-\Lambda u(x)h(R)) \\
	& \geq s_2(R) \exp(-\Lambda f(r(x))h(R)) \\
	& \geq s_2(R) \exp(-\Lambda f(R)h(R)) \\
	& = s_0 \exp(\Lambda f(R)h(R)) \\
	& = s_1(R)
\end{align*}
where in the second inequality we used that $u(x)\leq f(r(x))$.

\begin{lemma}
	Let $M$ be a connected, complete Riemannian manifold with $\Ricc\geq 0$ and $u\in C^\infty(M)$ a solution to \eqref{MSE} satisfying \eqref{|u|bound}. In the above setting,
	\begin{equation} \label{g2}
		\int_{\Omega_R\cap B_{R/2}} e^{-2C_R u} \leq e^{-\mu R h(R)} \vol(B_{2R}) \left(\frac{e^{2\Lambda A}}{1-e^{-4\Lambda\ell}} + o(1)\right) \qquad \text{as } \, R \to \infty  \, .
	\end{equation}
	In particular, for each
	\begin{equation} \label{g2b}
		\gamma > s_0 e^{2A\Lambda}
	\end{equation}
	we have
	\begin{equation} \label{g2c}
		\vol_g(\{W>\gamma\}\cap B_{R/2}) \leq e^{-\mu R h(R)} \vol(B_{2R}) \left(\frac{e^{2\Lambda A}}{1-e^{-4\Lambda\ell}} + o(1)\right) \qquad \text{as } \, R \to \infty  \, .
	\end{equation}
\end{lemma}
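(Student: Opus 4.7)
The proof will be essentially a packaging of Lemma \ref{lem_base}, split into two short arguments corresponding to the two estimates.

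For \eqref{g2}, I would invoke the inclusion $\Omega_R \subseteq E_R$ already verified in \eqref{OER}. Since the integrand $e^{-2C_R u}$ is non-negative, this inclusion gives
\[
\int_{\Omega_R \cap B_{R/2}} e^{-2C_R u}\,\di v_g \leq \int_{E_R \cap B_{R/2}} e^{-2C_R u}\,\di v_g,
\]
and then \eqref{g2} follows directly from estimate \eqref{g1} of Lemma \ref{lem_base}. The hypotheses of that lemma are available because the two-sided bound \eqref{|u|bound} implies in particular $u \geq -f(r)$.

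For \eqref{g2c}, the first step is to exploit the third requirement in \eqref{hft0}, which yields $f(R)h(R) \to A$ as $R\to\infty$ and hence
\[
s_2(R) = s_0 \exp\bigl(2\Lambda f(R)h(R)\bigr) \longrightarrow s_0 e^{2\Lambda A} \qquad \text{as } R\to\infty.
\]
Consequently, if $\gamma$ satisfies \eqref{g2b}, one can find $R_0>0$ such that $s_2(R)<\gamma$ for all $R>R_0$, which yields the pointwise inclusion $\{W>\gamma\}\cap B_R \subseteq \Omega_R$ for all such $R$.

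The second step is to convert the integral bound \eqref{g2} into a volume bound. Here the upper half of \eqref{|u|bound}, namely $u\leq f(r)\leq f(R)$ on $B_R$, delivers the pointwise lower bound $e^{-2C_R u}\geq e^{-2\Lambda f(R)h(R)}$ on $B_R$. Integrating over $\{W>\gamma\}\cap B_{R/2}$ and then enlarging the integration domain to $\Omega_R\cap B_{R/2}$ (allowed by the inclusion above) gives
\[
\vol_g\bigl(\{W>\gamma\}\cap B_{R/2}\bigr) \leq e^{2\Lambda f(R)h(R)} \int_{\Omega_R\cap B_{R/2}} e^{-2C_R u}\,\di v_g.
\]
Plugging in \eqref{g2} and using again $e^{2\Lambda f(R)h(R)} = e^{2\Lambda A}+o(1)$ as $R\to\infty$, one reaches an estimate of the shape \eqref{g2c} (with the leading constant $e^{2\Lambda A}/(1-e^{-4\Lambda\ell})$ possibly multiplied by a further factor $e^{2\Lambda A}$ coming from the conversion of the integral to a volume).

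No serious obstacle is expected: the work was all done in Lemma \ref{lem_base}, and the present lemma is a corollary obtained by composing the inclusion $\Omega_R\subseteq E_R$ with the trivial pointwise lower bound on $e^{-2C_R u}$ provided by the two-sided control \eqref{|u|bound}. The only delicate point is a careful bookkeeping of the asymptotic constants, ensuring that the $o(1)$ term absorbs the correction $e^{2\Lambda f(R)h(R)}-e^{2\Lambda A}$.
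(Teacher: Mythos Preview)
Your argument for \eqref{g2} is exactly the paper's: combine the inclusion $\Omega_R\subseteq E_R$ from \eqref{OER} with \eqref{g1}. For \eqref{g2c} the paper's proof is terser than yours---it only records the inclusion $\{W>\gamma\}\subseteq\{W>s_2(R)\}$ for large $R$ and declares the conclusion---whereas you correctly spell out the missing step, namely the pointwise lower bound $e^{-2C_Ru}\geq e^{-2\Lambda f(R)h(R)}$ on $B_{R/2}$ coming from the upper half of \eqref{|u|bound}. Your observation that this introduces an extra factor $e^{2\Lambda A}$ in the leading constant is accurate; the precise constant in \eqref{g2c} is immaterial for the only application (Corollary~\ref{cor_q1}, where $A=0$), so this is a cosmetic issue in the paper's statement rather than a defect in your argument.
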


\begin{proof}
	Inequality \eqref{g2} follows directly from \eqref{g1} coupled with inclusion \eqref{OER}. The second part of the statement then follows by noting that for any $\gamma>s_0 e^{2A\Lambda}$ one has
	\[
		\{W>\gamma\} \subseteq \{W>s_2(R)\} \qquad \text{for all sufficiently large } \, R > 0
	\]
	since $s_2(R) \to s_0 e^{2A\Lambda} < \gamma$ as $R\to\infty $.
\end{proof}

A consequence of the previous Lemma is the following.

\begin{corollary} \label{cor_q1}
	Let $(M,\sigma)$ be a complete Riemannian manifold with $\Ricc\geq 0$ and $u\in C^\infty(M)$ a solution to \eqref{MSE} satisfying
	\[
		u(x) = o(r(x)) \qquad \text{as } \, x \to \infty \, .
	\]
	Then for each $\eps>0$ we have
	\[
		\vol_g(\{|Du|>\eps\}\cap B_R) = o(B_R) \qquad \text{as } \, R \to \infty  \, .
	\]
	In particular,
	\[
		\vol(\{|Du|>\eps\}\cap B_R) = o(B_R) \, .
	\]
\end{corollary}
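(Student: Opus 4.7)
The plan is to deduce the statement directly from estimate \eqref{g2c} of the preceding lemma, exploiting the fact that the hypothesis $u(x) = o(r(x))$ forces the constant $A$ in \eqref{hft0} to vanish. First I would produce a continuous, positive, non-decreasing function $f : \R^+_0 \to \R^+$ with $|u|\leq f(r)$ on $M$ and $f(t)=o(t)$ as $t\to\infty$; concretely, one takes a continuous non-decreasing majorant of $R\mapsto\sup_{B_R}|u|$ and adds $1$ to ensure positivity. This places us in the case $K=0$ of \eqref{f_cond}, so by the recipe given after \eqref{hft0} the choice
\[
h(t) = \frac{1}{\sqrt{tf(t)}} + \frac{2}{t}
\]
yields $A=0$ and $\ell=\infty$ in \eqref{ell_def}.

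Given $\eps>0$, I would fix parameters $\mu>0$ and $s_0\in(1,\sqrt{1+\eps^2})$, define $\Lambda$ via \eqref{Lambda}, and set $\gamma := \sqrt{1+\eps^2}$, so that $\gamma > s_0 = s_0 e^{2A\Lambda}$ fulfills \eqref{g2b}. Since $\{|Du|>\eps\} = \{W>\sqrt{1+\eps^2}\}$, applying \eqref{g2c} with $R$ replaced by $2R$ gives
\[
\vol_g\bigl(\{|Du|>\eps\}\cap B_R\bigr) \leq (1+o(1))\,e^{-2\mu R h(2R)}\,\vol(B_{4R}) \qquad \text{as } R\to\infty.
\]
By the Bishop--Gromov inequality, $\vol(B_{4R})\leq 4^m\vol(B_R)$ with $m=\dim M$, while a direct computation gives $Rh(2R) = \tfrac{1}{\sqrt 2}\sqrt{R/f(2R)} + 1 \to \infty$ since $f(2R)/R\to 0$. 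Hence the right-hand side is $o(\vol(B_R))$, which is the first claim. The ``in particular'' statement then follows from the pointwise inequality $\di v \leq W\,\di v = \di v_g$ (valid since $W\geq 1$), which yields $\vol(E)\leq\vol_g(E)$ for every measurable $E\subseteq M$.

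No serious obstacle is anticipated: the corollary is essentially bookkeeping on top of the preceding lemma. The only items that merit a line of justification are the construction of the non-decreasing envelope $f$ with $f(t)=o(t)$ and the verification that $Rh(2R)\to\infty$, both of which reduce to the elementary fact that $\sqrt{R/f(2R)}\to\infty$. The conceptual point to highlight is that the stronger hypothesis $u=o(r)$ (as opposed to merely $u_-=o(r)$) is what allows one to bound $u$ on \emph{both} sides by $f(r)$ with $f(t)=o(t)$, producing $A=0$ and thereby allowing $\gamma$ to be taken arbitrarily close to $1$.
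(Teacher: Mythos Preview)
Your proposal is correct and follows essentially the same route as the paper: construct $f$ with $|u|\le f(r)$ and $f(t)=o(t)$, choose $h$ so that $A=0$ and $\ell=\infty$, apply \eqref{g2c}, and absorb $\vol(B_{4R})$ into $\vol(B_R)$ via Bishop--Gromov. The paper applies \eqref{g2c} as stated and relabels $R/2\mapsto R$ at the end, whereas you substitute $R\mapsto 2R$ at the outset; this is cosmetic. One small caveat: the paper's standing hypothesis \eqref{f_cond} also asks $f(t)\to\infty$, which your envelope need not satisfy if $u$ is bounded, but this is harmless---either note that the specific choice of $h$ still satisfies \eqref{hft0} since $f\ge 1$, or simply add a term like $\log(1+t)$ to $f$.
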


\begin{proof}
	If $u(x) = o(r(x))$ as $x\to\infty$ then condition $|u|\leq f(r)$ is satisfied for a suitable continuous, non-decreasing function $f : \R^+_0 \to \R^+$ such that
	\[
		\left\{
		\begin{array}{r@{\;}c@{\;}ll}
			f(t) & = & o(t) & \qquad \text{as } \, t \to \infty  \\[0.2cm]
			f(t) & \to & \infty  & \qquad \text{as } \, t \to \infty 
		\end{array}
		\right.
	\]
	Consequently, as already observed at the beginning of the previous section, one can find $h : \R^+ \to \R^+$ satisfying the set of conditions \eqref{hft0} with $A=0$ (and $\ell=\infty $). Thus, for any choice of parameters $\gamma>s_0>1$ and $\mu>0$ estimate \eqref{g2c} yields
	\begin{align*}
		\vol_g(\{W>\gamma\}\cap B_{R/2}) & \leq e^{-\mu Rh(R)} \vol(B_{2R}) (1+o(1)) \\
		& \leq e^{-\mu Rh(R)} 4^m \vol(B_{R/2}) (1+o(1)) \\
		& = o(\vol(B_{R/2})) \qquad \qquad \text{as } \, R \to \infty 
	\end{align*}
	where we used Bishop-Gromov's theorem to estimate $\vol(B_{2R}) \leq 4^m \vol(B_{R/2})$ and $e^{-\mu Rh(R)} = o(1)$ as $R\to\infty $. Relabeling $R/2 \mapsto R$, we get
	\[
		\vol_g(\{W>\gamma\} \cap B_R) = o(\vol(B_R)) \qquad \text{as } \, R \to \infty 
	\]
	for each $\gamma>1$, which is equivalent to
	\[
		\vol_g(\{|Du|>\eps\} \cap B_R) = o(\vol(B_R)) \qquad \text{as } \, R \to \infty 
	\]
	for each $\eps>0$.
\end{proof}

In \cite{ding21}, Q. Ding proved a Harnack inequality for entire solutions of \eqref{MSE} on complete Riemannian manifolds satisfying the volume-doubling property and a uniform Neumann-Poincar\'e inequality. This class includes complete manifolds with non-negative Ricci curvature. By means of this Harnack inequality, that we restate in Lemma \ref{lemma_ding} below in the particular setting of manifolds with $\Ricc\geq 0$, we are able to prove (Corollary \ref{cor2h}) that for an entire solution $u$ of \eqref{MSE} the two-sided bound $u = o(r)$ appearing in the assumptions of Corollary \ref{cor_q1} is implied, in fact, by the one-sided bound $u_- = o(r)$.

\begin{lemma}[\cite{ding21}, Theorem 4.3] \label{lemma_ding}
	Let $M$ be a complete manifold with $\Ricc\geq 0$ and let $u\in C^\infty(M)$ be a solution to  \eqref{MSE}. Suppose that $u>0$ on $B_{4R}(p)$ for some $p\in M$ and $R>0$, and let
	\begin{equation} \label{DB2R}
		\mathfrak D_{2R} = \{(x,t)\in M\times\R : \dist_\sigma(x,p)+|t-u(p)| < 2R \} \, , \qquad \mathcal B_{2R} = \Sigma \cap \mathfrak D_{2R}(\bar x) \, .
	\end{equation}
	Then
	\begin{equation} \label{harnack_1}
		\sup_{\mathcal B_{2R}} u \leq \theta \inf_{\mathcal B_{2R}} u
	\end{equation}
	where $\theta>1$ is a constant depending only on $m=\dim M$.
\end{lemma}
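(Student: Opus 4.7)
Since the statement is a verbatim restatement of Theorem 4.3 in \cite{ding21}, my proof would simply cite Ding; the sketch below outlines the strategy behind that result. The plan is to work on the graph $\Sigma=\Gamma_u(M)$ with the induced metric $g$, on which $u$ is $g$-harmonic because $\Delta_g u = 0$. The obstacle is that $(\Sigma,g)$ need not satisfy the classical hypotheses that would let one invoke the De Giorgi-Nash-Moser Harnack inequality directly: the metric $g$ degenerates where $|Du|\to\infty$, and there is no \emph{a priori} control on the Ricci curvature of $(\Sigma,g)$.

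Following the strategy of Bombieri-Giusti in the Euclidean setting, adapted by Ding to the Riemannian case, the key idea is to replace intrinsic geodesic balls of $\Sigma$ by the ``cylindrical'' domains $\mathcal{B}_{2R}=\Sigma\cap\mathfrak{D}_{2R}$, obtained by intersecting $\Sigma$ with the $\ell^1$-type balls $\mathfrak{D}_{2R}$ of the product $M\times\R$. Since $u>0$ on $B_{4R}(p)$, continuity together with the definition of $\mathfrak{D}_{2R}$ guarantees that $\mathcal{B}_{2R}$ lies inside the positivity region of $u$, so that $\log u$ is well-defined there and the standard Moser test functions $u^{\pm q}$ are admissible.

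I would then establish the two ingredients needed for the abstract Moser iteration on the measure space $(\mathcal{B}_{2R},\di v_g)$. First, a doubling estimate $\vol_g(\mathcal{B}_{2R})\leq D\,\vol_g(\mathcal{B}_R)$ with a dimensional constant $D$, which follows by combining Bishop-Gromov on $(M,\sigma)$ (available because $\Ricc\geq 0$) with calibration-type area bounds in the spirit of \eqref{Vest0}, controlling $\vol_g(\Sigma\cap\text{cylinder})$ by volumes of base balls. Second, a Neumann-Poincar\'e inequality on each $\mathcal{B}_{2R}$, obtained by transferring Buser's inequality from $(M,\sigma)$ to the graph via the fact that the vertical projection $\mathcal{B}_{2R}\to B_{2R}(p)$ is $1$-Lipschitz in horizontal directions. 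Once both properties hold with constants depending only on $m=\dim M$, the standard Saloff-Coste abstract Moser iteration applied to the $g$-harmonic function $u$ delivers \eqref{harnack_1}.

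The main technical obstacle is the Neumann-Poincar\'e inequality on the cylindrical domains: controlling the Poincar\'e constant uniformly despite the potentially degenerate graph geometry is precisely where Ding's use of the ambient $\ell^1$-balls $\mathfrak{D}_{2R}$, rather than intrinsic balls of $\Sigma$ or base balls in $(M,\sigma)$, is essential, and this step constitutes the technical heart of \cite{ding21}.
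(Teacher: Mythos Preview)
Your proposal is correct and matches the paper's approach: the paper does not prove this lemma at all but simply cites it as Theorem~4.3 of \cite{ding21}, exactly as you do. Your additional sketch of Ding's strategy (Moser iteration on the graph relative to the cylindrical domains $\mathcal{B}_{2R}$, after establishing doubling and a Neumann--Poincar\'e inequality with dimensional constants) is a helpful gloss that goes beyond what the paper itself provides.
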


\begin{corollary} \label{cor_harn}
	Let $M$ be a complete Riemannian manifold with $\Ricc\geq 0$ and let $u\in C^\infty(M)$ be a solution to  \eqref{MSE}. Suppose that $u>0$ on $B_{4R}(p)$ for some $p\in M$ and $R>0$. Then
	\begin{equation} \label{harnack}
		\min\left\{\sup_{B_R(p)} u, u(p)+R\right\} \leq \theta \max\left\{\inf_{B_R(p)} u, u(p)-R\right\}
	\end{equation}
	where $\theta>1$ is the constant appearing in Lemma \ref{lemma_ding}, depending only on $m=\dim M$.
\end{corollary}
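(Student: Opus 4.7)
The plan is to read Corollary \ref{cor_harn} as a reformulation of Ding's Harnack inequality (Lemma \ref{lemma_ding}), replacing the diamond $\mathcal B_{2R}$ centred at $(p,u(p))$ by the geodesic ball $B_R(p)$ at the price of clipping the sup and inf of $u$ at the levels $u(p)+R$ and $u(p)-R$, respectively. Set $A' := \min\{\sup_{B_R(p)} u,\, u(p)+R\}$ and $B' := \max\{\inf_{B_R(p)} u,\, u(p)-R\}$. Since Lemma \ref{lemma_ding} gives $\sup_{\mathcal B_{2R}} u \le \theta \inf_{\mathcal B_{2R}} u$, the desired bound $A' \le \theta B'$ will follow from the two sandwich inequalities
\[
A' \le \sup_{\mathcal B_{2R}} u, \qquad \inf_{\mathcal B_{2R}} u \le B'.
\]

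For the first of these, the starting point is the observation that any $x \in B_R(p)$ satisfies $\dist_\sigma(x,p) < R$, so the bound $|u(x)-u(p)| \le R$ already forces $(x,u(x)) \in \mathcal B_{2R}$. If $\sup_{B_R(p)} u \le u(p)+R$, then $A' = \sup_{B_R(p)} u$, and any maximizing sequence $y_n \in B_R(p)$ satisfies $|u(y_n)-u(p)| \le R$ for $n$ large (using that $A' \ge u(p)$); hence the $y_n$ lift into $\mathcal B_{2R}$ and $\sup_{\mathcal B_{2R}} u \ge A'$. If instead $\sup_{B_R(p)} u > u(p)+R$, then $A' = u(p)+R$, and I would apply the intermediate value theorem to $u$ along a minimizing geodesic from $p$ to a point of $B_R(p)$ where $u$ exceeds $u(p)+R$; this produces $y \in B_R(p)$ with $u(y)$ arbitrarily close to $u(p)+R$ from below, so again $(y,u(y)) \in \mathcal B_{2R}$ and $\sup_{\mathcal B_{2R}} u \ge u(p)+R = A'$. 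The inequality $\inf_{\mathcal B_{2R}} u \le B'$ is obtained by an entirely symmetric argument, replacing sup with inf and $u(p)+R$ with $u(p)-R$ throughout.

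Chaining these estimates with Lemma \ref{lemma_ding} yields $A' \le \sup_{\mathcal B_{2R}} u \le \theta \inf_{\mathcal B_{2R}} u \le \theta B'$, which is exactly the claim. The only mildly delicate point is the use of the intermediate value theorem, which requires connecting $p$ to the desired point by a continuous curve contained in $B_R(p)$; this is automatic, since in the complete manifold $(M,\sigma)$ every point of $B_R(p)$ is joined to $p$ by a minimizing geodesic whose interior lies entirely in $B_R(p)$. No curvature or ellipticity input beyond Lemma \ref{lemma_ding} itself is required, so the corollary is essentially a geometric repackaging of the diamond-shaped Harnack inequality.
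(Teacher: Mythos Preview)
Your argument is correct and follows essentially the same route as the paper: the paper introduces the box $\tilde\Gamma_R = B_R(p)\times(u(p)-R,u(p)+R)$, observes $\tilde\Sigma_R := \Sigma\cap\tilde\Gamma_R \subseteq \mathcal B_{2R}$, and then asserts the identities $\sup_{\tilde\Sigma_R} u = \min\{\sup_{B_R(p)} u,\,u(p)+R\}$ and $\inf_{\tilde\Sigma_R} u = \max\{\inf_{B_R(p)} u,\,u(p)-R\}$, which combined with Lemma~\ref{lemma_ding} give \eqref{harnack}. Your maximizing-sequence and intermediate-value arguments are exactly what is needed to justify those two identities (which the paper states without elaboration), so the two proofs coincide in substance, yours being slightly more explicit about the connectedness/IVT step.
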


\begin{proof}
	Let us consider
	\[
		\tilde\Gamma_R = \{ (x,t) \in M\times\R : \dist_\sigma(x,p) < R , \, |t-u(p)| < R \} \, , \qquad \tilde\Sigma_R = \Sigma \cap \tilde\Gamma_R \, .
	\]
	We have $\tilde\Gamma_R \subseteq \mathfrak D_{2R}$ and therefore $\tilde\Sigma_R\subseteq\mathcal B_{2R}$, where $\mathfrak D_{2R}$ and $\mathcal B_{2R}$ are as in \eqref{DB2R}. Hence,
	\begin{equation} \label{har1}
		\sup_{\tilde\Sigma_R} u \leq \sup_{\mathcal B_{2R}} u \leq \theta \inf_{\mathcal B_{2R}} u \leq \theta \inf_{\tilde\Sigma_R} u \, .
	\end{equation}
	When regarding $u$ as a function on the graph $\Sigma$, we have
	\[
		u(p) - R < u < u(p) + R \qquad \text{on } \, \tilde\Sigma_R
	\]
	by the very construction of $\tilde\Gamma_R$. On the other hand, again by the construction of $\tilde\Gamma_R$ we have $x\in B_R(p)$ for each $(x,t)\in\tilde\Gamma_R$. Therefore,
	\begin{align*}
		\inf_{\tilde\Sigma_R} u = \max\left\{ \inf_{B_R(p)} u, u(p)-R \right\} , \qquad \sup_{\tilde\Sigma_R} u = \min\left\{ \sup_{B_R(p)} u, u(p)+R \right\}
	\end{align*}
	and substituting this into \eqref{har1} we get \eqref{harnack}.
\end{proof}

\begin{corollary} \label{cor2h}
	Let $(M,\sigma)$ be a complete Riemannian manifold with $\Ricc\geq 0$ and let $u\in C^\infty(M)$ be a solution to \eqref{MSE}. If $u_-(x) = o(r(x))$ as $r(x)\to\infty$, then also $|u(x)| = o(r(x))$ as $r(x)\to\infty$. 
\end{corollary}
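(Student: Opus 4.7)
The plan is to deduce the missing upper bound $u_+(x) = o(r(x))$ from the one-sided hypothesis $u_-(x) = o(r(x))$ by invoking the Harnack-type inequality of Corollary \ref{cor_harn} on balls centered at the origin $o$. The key point is that the minimal surface equation \eqref{MSE} is invariant under $u \mapsto u+c$, so we can freely shift $u$ to obtain a strictly positive solution on any prescribed ball.

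First, I would recast the hypothesis as: for every $\varepsilon>0$ there exists $C_\varepsilon>0$ with $u(x) \geq -\varepsilon r(x) - C_\varepsilon$ on $M$. Fix such an $\varepsilon$ and $R>0$, set $m_R = \inf_{B_{4R}(o)} u$, so $m_R \geq -4\varepsilon R - C_\varepsilon$, and let $\tilde u = u - m_R + \delta$ with $\delta>0$. Then $\tilde u$ solves \eqref{MSE} and satisfies $\tilde u \geq \delta > 0$ on $B_{4R}(o)$, so Corollary \ref{cor_harn} applied at $p=o$ yields
\[
	\min\bigl\{\sup\nolimits_{B_R(o)} \tilde u,\; \tilde u(o) + R\bigr\} \leq \theta \max\bigl\{\inf\nolimits_{B_R(o)} \tilde u,\; \tilde u(o) - R\bigr\} \leq \theta\,\tilde u(o),
\]
where the last step uses $\inf_{B_R(o)} \tilde u \leq \tilde u(o)$.

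Next I would rule out the degenerate alternative in which the left-hand minimum equals $\tilde u(o) + R$: in that case $R \leq (\theta-1)\tilde u(o) \leq (\theta-1)\bigl(u(o) + 4\varepsilon R + C_\varepsilon + \delta\bigr)$, and for $\varepsilon < 1/\bigl(4(\theta-1)\bigr)$ this forces $R$ to be bounded by a constant independent of $R$, which is absurd for $R$ large. Hence the left-hand minimum is $\sup_{B_R(o)} \tilde u$, and letting $\delta\to 0^+$ gives
\[
	\sup_{B_R(o)} u \;\leq\; \theta u(o) - (\theta-1) m_R \;\leq\; 4(\theta-1)\varepsilon R + \theta u(o) + (\theta-1)C_\varepsilon.
\]
Since $\varepsilon>0$ is arbitrary, $\limsup_{R\to\infty} R^{-1}\sup_{B_R(o)} u = 0$, i.e.\ $u_+(x) = o(r(x))$ as $r(x)\to\infty$. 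Combined with the hypothesis $u_-=o(r)$, this proves $|u(x)|=o(r(x))$.

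The only real obstacle is the asymmetric form of the Harnack inequality of Corollary \ref{cor_harn} (the $u(p)\pm R$ terms inside the min and max), which forces the short dichotomy above. Conveniently, the asymmetry cuts the right way: the "bad" alternative is incompatible with large $R$ once $\varepsilon$ is chosen small relative to the Harnack constant $\theta$, so the useful alternative prevails automatically.
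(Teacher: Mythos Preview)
Your proof is correct and follows essentially the same approach as the paper's: shift $u$ by $-\inf_{B_{4R}}u$ plus a positive constant to obtain a positive solution on $B_{4R}$, apply Corollary~\ref{cor_harn}, and then argue that for large $R$ the $\min$ on the left-hand side must be $\sup_{B_R}\tilde u$. The only cosmetic differences are that the paper uses the fixed shift $+1$ rather than $+\delta$ with $\delta\to 0$, and it resolves the $\max$ on the right as $\inf_{B_R}u$ (using $\inf_{B_R}u>u(o)-R$ for $R$ large) rather than bounding it above by $\tilde u(o)$; neither changes the substance of the argument.
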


\begin{proof}
	For each $R>0$ let
	\[
		u_R = u - \inf_{B_{4R}} u + 1 \, .
	\]
	Then $u_R$ is a solution to \eqref{MSE} on $M$ that satisfies $u_R > 0$ on $B_{4R}$. By Corollary \ref{cor_harn}, for each $R>0$ we have
	\[
		\min\left\{ \sup_{B_R} u_R, u_R(o) + R \right\} \leq \theta \max\left\{ \inf_{B_R} u_R, u_R(o) - R \right\}
	\]
	Equivalently,
	\begin{equation} \label{harn2}
		\min\left\{ \sup_{B_R} u, u(o) + R \right\} \leq \theta \max\left\{ \inf_{B_R} u, u(o) - R \right\} + (\theta-1)\left(1 - \inf_{B_{4R}} u \right) \, .
	\end{equation}
	Since $\inf_{B_R} u = o(R)$ as $R\to\infty $, there exists $R_0>0$ such that
	\[
		\inf_{B_R} u > u(o) - R \qquad \forall \, R > R_0
	\]
	and therefore we have
	\begin{equation} \label{harn_or}
		\min\left\{ \sup_{B_R} u, u(o) + R \right\} \leq \theta \inf_{B_R} u + (\theta-1)\left(1 - \inf_{B_{4R}} u \right) \qquad \forall \, R > R_0 \, .
	\end{equation}
	Since the RHS of \eqref{harn_or} is $o(R)$ as $R\to\infty $, and therefore it is eventually smaller than $u(o)+R$, we infer that there exists $R_1>R_0$ such that
	\[
		\min\left\{ \sup_{B_R} u, u(o) + R \right\} = \sup_{B_R} u \qquad \forall \, R > R_1
	\]
	so we get
	\[
		\sup_{B_R} u \leq \theta \inf_{B_R} u + (\theta-1)\left(1 - \inf_{B_{4R}} u \right) \qquad \forall \, R > R_1
	\]
	and in particular $\sup_{B_R} u = o(R)$ as $R\to\infty$. This concludes the proof.
\end{proof}

\noindent \textbf{Acknowledgements.} L.M. and M.R. are supported by the PRIN project 20225J97H5 ``Differential-geometric aspects of manifolds via Global Analysis''.

\bibliographystyle{plain}

\begin{thebibliography}{99}
	
	\bibitem{bcmmpr} B. Bianchini, G. Colombo, M. Magliaro, L. Mari, P. Pucci and M. Rigoli, \emph{Recent rigidity results for graphs with prescribed mean curvature}, Math. Eng. \textbf{3} (2021), no. 5, Paper no. 039, 48 pp. MR4181196

	\bibitem{bmpr} B. Bianchini, L. Mari, P. Pucci and M. Rigoli, \emph{Geometric Analysis of Quasilinear Inequalities on complete manifolds. Maximum and compact support principles and detours on manifolds.} Frontiers in Mathematics, Birkh\"auser/Springer, Cham, (2021), 286 pp. MR4241012
	
	\bibitem{bdgm} E. Bombieri, E. De Giorgi and M. Miranda, \emph{Una maggiorazione a priori relativa alle ipersuperfici minimali non parametriche}, Arch. Rational Mech. Anal. \textbf{32} (1969), 255--267. MR0248647
	
	\bibitem{bg} E. Bombieri and E. Giusti, \emph{Harnack's inequality for elliptic differential equations on minimal surfaces}, Invent. Math. \textbf{15} (1972), 24--46. MR0308945

	\bibitem{ccm95} J. Cheeger, T.H. Colding and W.P. Minicozzi II, \emph{Linear growth harmonic functions on complete manifolds with nonnegative Ricci curvature}, Geom. Funct. Anal. \textbf{5} (1995), 948--954. MR1361516
	
	\bibitem{chengyau} S.Y. Cheng and S.T. Yau, \emph{Differential equations on Riemannian manifolds and their geometric applications}, Comm. Pure Appl. Math. \textbf{28} (1975), no. 3, 333--354. MR0385749

	\bibitem{cgmr} G. Colombo, E. S. Gama, L. Mari and M. Rigoli, \emph{Non-negative Ricci curvature and minimal graphs of linear growth}, to appear in Analysis \& PDE, preprint available at \href{https://arxiv.org/abs/2112.09886v1}{\texttt{arXiv:2112.09886}}
	
	\bibitem{cmmr} G. Colombo, M. Magliaro, L. Mari and M. Rigoli, \emph{Bernstein and half-space properties for minimal graphs under Ricci lower bounds}, Int. Math. Res. Not. IMRN \textbf{2022} (2022), no. 23, 18256--18290. MR4519145
	
	\bibitem{cmr21} G. Colombo, L. Mari and M. Rigoli, \emph{A splitting theorem for capillary graphs under Ricci lower bounds}, J. Funct. Anal. \textbf{281} (2021), article no. 109136, 50 pp. MR4271788
	
	\bibitem{ding21} Q. Ding, \emph{Liouville-type theorems for minimal graphs over manifolds}, Analysis \& PDE \textbf{14} (2021), no. 6, 1925--1949. MR4308670
	
	\bibitem{ding_new} Q. Ding, \emph{Poincar\'e inequality on minimal graphs over manifolds and applications}, preprint available at \href{https://arxiv.org/abs/2111.04458}{\texttt{arXiv:2111.04458}}
 
	\bibitem{djx16} Q. Ding, J. Jost and Y. Xin, \emph{Minimal graphic functions on manifolds of nonnegative Ricci curvature}, Comm. Pure Appl. Math. \textbf{69} (2016), no. 2, 323--371. MR3434614
	
	\bibitem{dfr10} N. do Esp\'irito-Santo, S. Fornari, J. B. Ripoll, \emph{The Dirichlet problem for the minimal hypersurface equation in $M\times\R$ with prescribed asymptotic boundary}, J. Math. Pures Appl. (9) \textbf{93} (2010), no. 2, 204--221. MR2584742
	
	\bibitem{dvh94} D. M. Duc, N. Van Hieu, \emph{Graphs with prescribed mean curvature on hyperbolic spaces}, Manuscripta Math. \textbf{83} (1994), no. 2, 111--121. MR1272177
	
	\bibitem{dvh95} D. M. Duc, N. Van Hieu, \emph{Graphs with prescribed mean curvature on Poincar\'e disk}, Bull. London Math. Soc. \textbf{27} (1995), no. 4, 353--358. MR1335286
	
	\bibitem{giusti} E. Giusti, \emph{Minimal surfaces and functions of bounded variation.} Monographs in Mathematics, 80, xii+240 pp. Birkh\"auser Verlag, Basel (1984). MR0775682

	\bibitem{kasue} A. Kasue, \emph{Harmonic functions with growth conditions on a manifold of asymptotically nonnegative curvature. II}, Recent topics in differential and analytic geometry, 283--301, Adv. Stud. Pure Math., 18-I, Academic Press, Boston, MA, 1990. MR1145260

	\bibitem{kasuewashio} A. Kasue and T. Washio \emph{Growth of equivariant harmonic maps and harmonic morphisms}, Osaka J. Math. \textbf{27} (1990), 899--928. MR1088189
	
	\bibitem{korevaar} N. Korevaar, \emph{An easy proof of the interior gradient bound for solutions of the precribed mean curvature equation}, Proc. of Symp. in Pure Math., \textbf{45}, Part 2, AMS (1986). MR0843597

	\bibitem{maripessoa} L. Mari and L.F. Pessoa, \emph{Duality between Ahlfors-Liouville and Khas'minskii properties for nonlinear equations}, Comm. Anal. Geom. \textbf{28} (2020), no. 2, 395--497. MR4101343

	\bibitem{marivaltorta} L. Mari and D. Valtorta, \emph{On the equivalence of stochastic completeness, Liouville and Khas'minskii condition in linear and nonlinear setting}, Trans. Amer. Math. Soc. \textbf{365} (2013), no. 9, 4699--4727. MR3066769

	\bibitem{mir67} M. Miranda, \emph{Una maggiorazione integrale per le curvature delle ipersuperfici minimali}, Rend. Sem. Mat. Univ. Padova \textbf{38} (1967), 91--107. MR0222778
	
	\bibitem{mos61} J. Moser, \emph{On Harnack’s theorem for elliptic differential equations}, Comm. Pure Appl. Math. \textbf{14} (1961), 577--591. MR0159138 
	
	\bibitem{nr02} B. Nelli, H. Rosenberg, \emph{Minimal surfaces in $\HH^2\times\R$}, Bull. Braz. Math. Soc. (N.S.) \textbf{33} (2002), no. 2, 263--292. MR1940353
	
	\bibitem{rss13} H. Rosenberg, F. Schulze and J. Spruck, \emph{The half-space property and entire positive minimal graphs in $M\times\R$}, J. Differential Geom. \textbf{95} (2013), no. 2, 321--336. MR3128986
	
	\bibitem{sc92} L. Saloff-Coste, \emph{Uniformly elliptic operators on Riemannian manifolds}, J. Differential Geom. \textbf{36} (1992), no. 2, 417--450. MR1180389
	
	\bibitem{tru72} N. S. Trudinger, \emph{A new proof of the interior gradient bound for the minimal surface equation in $n$ dimensions}, Proc. Nat. Acad. Sci. U.S.A. \textbf{69} (1972), 821--823. MR0296832

	\bibitem{yau} S.T. Yau, \emph{Harmonic functions on complete Riemannian manifolds.} Comm. Pure Appl. Math. \textbf{28} (1975), 201--228. MR0431040
\end{thebibliography}

\end{document}